\newtheorem{thm}{Theorem}[section]
\newtheorem{cor}[thm]{Corollary}
\newtheorem{lemma}[thm]{Lemma}
\newtheorem{prop}[thm]{Proposition}
\theoremstyle{definition}
\newtheorem{rmk}[thm]{Remark}
\numberwithin{equation}{section}
\newcommand{\BbR}{\mathbb{R}}
\newcommand{\BbQ}{\mathbb{Q}}
\newcommand{\BbC}{\mathbb{C}}
\newcommand{\BbN}{\mathbb{N}}
\newcommand{\la}{\lambda}
\newcommand{\ka}{\kappa}
\newcommand{\be}{\beta}
\newcommand{\U}{{\mathcal{U}}}
\newcommand{\wpi}{\widetilde\pi}
\newcommand{\wta}{\widetilde A}
\renewcommand{\emptyset}{\varnothing}
\begin{document}

\baselineskip=17pt

\title[Multidimensional self-affine sets]
{Multidimensional self-affine sets: non-empty interior and the set of uniqueness}

\author{Kevin G. Hare}
\address{Department of Pure Mathematics \\
University of Waterloo \\
Waterloo, Ontario \\
Canada N2L 3G1}
\email{kghare@uwaterloo.ca}
\thanks{Research of K. G. Hare was supported by NSERC Grant RGPIN-2014-03154}

\author{Nikita Sidorov}
\address{School of Mathematics \\
The University of Manchester \\
Oxford Road, Manchester M13 9PL\\
 United Kingdom.}
 \email{sidorov@manchester.ac.uk}
\date{}

\begin{abstract}
Let $M$ be a $d\times d$ real contracting matrix. In this paper
we consider the self-affine iterated function system $\{Mv-u, Mv+u\}$, where $u$ is a cyclic vector.
Our main result is as follows: if $|\det M|\ge 2^{-1/d}$, then the attractor $A_M$
has non-empty interior.

We also consider the set $\mathcal U_M$ of points
in $A_M$ which have a unique address. We show that unless $M$
belongs to a very special (non-generic) class, the Hausdorff dimension
of $\mathcal U_M$ is positive. For this special class the full description
of $\mathcal U_M$ is given as well.

This paper continues our work begun in \cite{HS, HS2D}.
\end{abstract}

\keywords{Iterated function system, self-affine set, set of uniqueness}

\subjclass[2010]{28A80}

\maketitle

\section{Non-empty interior}

Let $d\ge2$ and $M$ be a $d\times d$ real matrix whose eigenvalues are all less than~1 in modulus.
Denote by $A_M$ the attractor for the contracting self-affine iterated function system (IFS) $\{Mv-u, Mv+u\}$,
i.e., $A_M=\{\pi_M(a_0a_1\dots) \mid a_n\in\{\pm1\}\}$, where
\[
\pi_M(a_0a_1\dots)=\sum_{k=0}^\infty a_k M^k u.
\]
If $A_M\ni x=\pi_M(a_0a_1\dots)$, then we call the sequence
    $a_0a_1\dots\in\{\pm1\}^{\mathbb N}$ an {\em address} of $x$. We assume our IFS to be
    {\em non-degenerate}, i.e., $A_M$ does not lie in any $(d-1)$-dimensional subspace of $\mathbb R^d$
    (i.e., $A_M$ {\em spans} $\mathbb R^d$).
    Let $u\in\mathbb R^d$ be a {\em cyclic vector} for $M$,
i.e., $\text{span}\{M^n u \mid n\ge0\}=\mathbb R^d$.

Our main result is as follows.

\begin{thm}\label{thm:nei}
If
\[
|\det M|\ge 2^{-1/d},
\]
then the attractor $A_M$ has non-empty interior. In particular, this is the case
when each eigenvalue of $M$ is greater than $2^{-1/d^2}$ in modulus.
\end{thm}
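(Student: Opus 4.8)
The plan is to reduce the non-empty interior question to a question about the measure of the attractor via a self-similarity argument, and then to estimate that measure using a covering/entropy computation that exploits the cyclic vector hypothesis.

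The plan is to reduce the statement, via a linear change of coordinates and a regrouping of addresses, to a self-affine IFS whose linear part has determinant at least $\tfrac12$ and whose translations are the vertices of a cube, and then to locate an explicit convex body inside the attractor by comparing the situation, direction by direction, with the one-dimensional Bernoulli IFS $\{\beta x\pm 1\}$.

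First I would exploit cyclicity: $u,Mu,\dots,M^{d-1}u$ is a basis of $\mathbb{R}^d$, and in this basis $M$ is the companion matrix of its characteristic polynomial and $u=e_1$; since having non-empty interior is preserved by linear isomorphisms we may assume this, so that $M^iu=e_{i+1}$ for $0\le i\le d-1$. Regrouping an address $a_0a_1\dots$ into blocks of length $d$ then gives
\[
\pi_M(a)=\sum_{j\ge0}N^jw_j,\qquad N:=M^d,\quad w_j:=\sum_{i=0}^{d-1}a_{jd+i}e_{i+1}\in\{\pm1\}^d,
\]
so $A_M$ is the attractor of the IFS $\{v\mapsto Nv+w:w\in\{\pm1\}^d\}$, which has $2^d$ maps, common linear part $N$, and $|\det N|=|\det M|^d\ge\tfrac12$. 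If $\rho_1,\dots,\rho_d$ are the eigenvalue moduli of $N$, then $\prod\rho_i=|\det N|\ge\tfrac12$ together with each $\rho_i<1$ forces every $\rho_i>\tfrac12$, i.e.\ each eigenvalue of $N$ sits strictly above the one-dimensional threshold $\tfrac12$.

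Write $F(S)=\bigcup_{w\in\{\pm1\}^d}(NS+w)$ for the Hutchinson operator; $A_M$ is its unique non-empty compact fixed point and $F$ is monotone, so it suffices to produce a bounded convex body $B$ with non-empty interior and $F^n(B)\supseteq B$ for some $n$, since then $\overline{\bigcup_kF^{kn}(B)}$ is the unique non-empty compact fixed point of $F^n$, hence equals $A_M$, and contains $B$. The model case is $d=1$: the interval $B=[-\tfrac1{1-\beta},\tfrac1{1-\beta}]$ satisfies $\beta B+1\ \cup\ \beta B-1\supseteq B$ precisely when $\beta\ge\tfrac12$, which is the classical identity $\{\sum_k\pm\beta^k\}=[-\tfrac1{1-\beta},\tfrac1{1-\beta}]$. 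In higher dimensions a useful guide is the zonotope $K:=\sum_{j\ge0}N^j[-1,1]^d$, which has non-empty interior (it contains $[-1,1]^d$) and satisfies $K=\operatorname{conv}\big(\bigcup_w(NK+w)\big)$; the task becomes to show that enough of $K$ is in fact already covered by the translates $NK+w$, which in each eigendirection of $N$ is again governed by the same $\tfrac12$-threshold, this being exactly why $|\det M|\ge2^{-1/d}$ is the hypothesis.

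The difficulty, and the main obstacle, is that $N=M^d$ may be strongly non-normal, so $NK$ is a very thin parallelepiped, $\bigcup_w(NK+w)$ need not be convex, and $A_M$ need not equal $K$; a genuinely multi-scale argument is then required. My approach would be to put $N$ in real Schur form (block upper-triangular, with $1\times1$ blocks $\mu_i$ and $2\times2$ rotation--scaling blocks for complex conjugate pairs) and to peel off coordinates from the bottom up: conditioning on the coordinates lying above a given block, that block's own coordinates receive, after the regrouping, a free-sign Bernoulli-type sum $\sum_j\mu^j\varepsilon_j$ with $|\mu|>\tfrac12$ --- which fills an interval (real block) or a set of positive area (complex block) --- while the strictly-upper-triangular entries only translate the center of that fiber; iterating up through all blocks, with enough care about measurability for Fubini to apply, yields $\lambda(A_M)>0$ and in fact a subset of $A_M$ of the form ``graph of a bounded map over a box, thickened in the fibre directions'', from which a box $B$ with $F^n(B)\supseteq B$ can be extracted. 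The hypothesis enters precisely as $|\det N|\ge\tfrac12\Leftrightarrow|\det M|\ge2^{-1/d}$, i.e.\ as the statement that every Schur block is above the threshold; the step I expect to be most delicate is the planar one --- ensuring that a complex block of modulus just above $\tfrac12$ still contributes positive area once coupled with the upper-triangular terms --- together with the bookkeeping needed to patch the blocks into an honest box. Finally, the ``in particular'' clause is immediate: if every eigenvalue $\lambda_i$ of $M$ satisfies $|\lambda_i|>2^{-1/d^2}$, then $|\det M|=\prod_{i=1}^d|\lambda_i|>(2^{-1/d^2})^d=2^{-1/d}$.
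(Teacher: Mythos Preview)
Your block-of-$d$ regrouping is exactly the paper's first step, phrased there as the Minkowski decomposition $A_M=A_{M^d}+MA_{M^d}+\cdots+M^{d-1}A_{M^d}$. After that the two arguments diverge, and yours has real gaps.

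The paper does not touch Schur forms or eigendirections at all. It uses the hypothesis only once, as $|\det M^d|\ge\tfrac12$, to conclude that the two-map attractor $A_{M^d}$ is connected (Shmerkin--Solomyak) and hence path-connected (via an Odlyzko--Poonen cylinder-intersection criterion). One then picks a path $\gamma\subset A_{M^d}$ through the cyclic vector $u$; cyclicity makes $\gamma,M\gamma,\dots,M^{d-1}\gamma$ span $\mathbb R^d$, and a purely topological lemma---the Minkowski sum of $d$ paths in $\mathbb R^d$ that span has non-empty interior---finishes the proof.

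Your Schur/fibre route leaves two substantive steps undone. For the ``planar block'' you need that the four-map attractor $\{\sum_j\mu^jc_j:c_j\in\{\pm1\pm i\}\}=A_\mu+iA_\mu$ has interior whenever $|\mu|>\tfrac12$; you give no argument, and since for $|\mu|<2^{-1/2}$ the two-map set $A_\mu$ can be totally disconnected, the one-dimensional interval picture does not transfer. (In fact, because complex eigenvalues of the real matrix $N$ come in conjugate pairs, $|\det N|\ge\tfrac12$ forces $|\mu|^2>\tfrac12$ for every complex $\mu$, so $A_\mu$ \emph{is} path-connected here---but exploiting that puts you back to needing a Minkowski-sum-of-paths statement, which is exactly the paper's key lemma.) Second, the promised extraction of a box $B$ with $F^n(B)\supseteq B$ from the fibred positive-measure heuristic is not carried out; positive Lebesgue measure alone does not yield non-empty interior for a self-affine set, and the off-diagonal Schur entries make the fibre translations depend on full infinite addresses rather than on points, so a genuine uniformity argument is still missing.
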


\begin{rmk}Note that if $|\det M|<\frac12$, then $A_M$ is a null set (see \cite{Gunturk})
and therefore, has empty interior. It is an interesting question whether $2^{-1/d}$
in Theorem~\ref{thm:nei} can be replaced with a constant independent of $d$.
\end{rmk}

\begin{cor}For an IFS $\{Mv+u_j\}_{j=1}^m$ with $m\ge2$ the same claim holds,
provided the IFS is non-degenerate.
\end{cor}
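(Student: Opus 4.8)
The plan is to imitate the proof of Theorem~\ref{thm:nei} rather than to deduce the corollary from it: non-degeneracy of an $m$-map IFS does not in general supply a cyclic difference vector $u_i-u_j$ — for $d\ge3$, take $M=\mathrm{diag}(\la_1,\dots,\la_d)$ with distinct $\la_i$ and $u_1=0$, $u_2=e_1,\dots,u_{d+1}=e_d$; this IFS is non-degenerate, yet the $M$-orbit of each $u_i-u_j$ spans only a proper coordinate subspace — so one cannot simply pass to a two-map sub-IFS. Let $\Phi(X)=\bigcup_{j=1}^m(MX+u_j)$ be the Hutchinson operator, so that $A:=A_M$ is its unique compact fixed point and $\Phi^n(K)\to A$ in the Hausdorff metric for every nonempty compact $K$. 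Since $\Phi$ is monotone with respect to inclusion, if one can exhibit a compact set $K$ with non-empty interior and $K\subseteq\Phi(K)$, then $(\Phi^n(K))_n$ is increasing with Hausdorff limit $A$, hence $A\supseteq K$ and the corollary follows. (In dimension one this is exactly how one sees that $A$ is a full interval when $|\det M|\ge\tfrac12$, and I expect the proof of Theorem~\ref{thm:nei} to follow the same convex-body scheme.)

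The natural candidate is
\[
K=\sum_{k\ge0}M^k\,\mathrm{conv}\{u_1,\dots,u_m\},
\]
a convex body, the Minkowski sum converging because the spectral radius of $M$ is less than $1$; note $A\subseteq K$ automatically. The point where the non-degeneracy hypothesis takes over the role played by cyclicity of $u$ in Theorem~\ref{thm:nei} is the verification that $K$ has non-empty interior. Indeed $K-K=\sum_{k\ge0}M^k\bigl(\mathrm{conv}\{u_j\}-\mathrm{conv}\{u_j\}\bigr)$, whose linear span is $\overline{\mathrm{span}}\{M^k(u_i-u_j):k\ge0,\ i\ne j\}$; and this equals $\BbR^d$ precisely because the IFS is non-degenerate, since $M^k(u_i-u_j)\in A-A$ while conversely $A-A$ is contained in that span. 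A bounded convex set whose affine hull is $\BbR^d$ has interior points, so $\mathrm{int}\,K\ne\emptyset$.

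Peeling off the $k=0$ term gives $K=\mathrm{conv}\{u_j\}+MK$, and as each $u_j\in\mathrm{conv}\{u_j\}$ the inclusion $\Phi(K)=\bigcup_j(MK+u_j)\subseteq K$ holds for free; it remains to prove the reverse inclusion
\[
\mathrm{conv}\{u_1,\dots,u_m\}+MK\ \subseteq\ \bigcup_{j=1}^m\bigl(u_j+MK\bigr),
\]
which forces $\Phi(K)=K$, hence $K=A$. This is the covering statement that occurs in the proof of Theorem~\ref{thm:nei}: there $m=2$ and, after centring, one covers a segment thickened by the convex body $MK$ by the two translates of $MK$ through $\pm u$; now the segment is replaced by the polytope $\mathrm{conv}\{u_j\}$. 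I would run the same estimate — the $m$ translates $u_j+MK$ cover $\mathrm{conv}\{u_j\}+MK$ as soon as $MK$ is thick enough along the edges of the polytope — and the quantitative input that suffices is the volume balance $m\,|\det M|^d\ge1$, which holds since $m\ge2$ and $|\det M|\ge2^{-1/d}$ give $m\,|\det M|^d\ge2\cdot\tfrac12=1$.

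The step I expect to be the genuine obstacle is this covering inclusion for an arbitrary polytope instead of a single segment: one must rule out gaps opening near the lower-dimensional faces of $\mathrm{conv}\{u_j\}$. For $m=2$ this is a one-parameter computation; for general $m$ I would triangulate $\mathrm{conv}\{u_j\}$ and verify the covering simplex by simplex (equivalently, control the overlaps of the bodies $u_j+MK$ across the facets), checking at each step that the slack furnished by $m\,|\det M|^d\ge1$ suffices. Should Theorem~\ref{thm:nei} instead be proved by a route that does not identify $A$ with a convex body, one would carry out the analogous construction with whatever sub-invariant body with non-empty interior its proof supplies. A minor, routine point throughout is the identification of $\overline{\mathrm{span}}(K-K)$ used above.
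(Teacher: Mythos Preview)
Your central covering step is not just hard---it is false. You ask for $\mathrm{conv}\{u_j\}+MK\subseteq\bigcup_j(u_j+MK)$; together with the easy inclusion $\Phi(K)\subseteq K$ this would give $\Phi(K)=K$, hence $A=K$ by uniqueness of the attractor, hence $A$ is \emph{convex}. But the attractors in Theorem~\ref{thm:nei} are almost never convex: already for $d=2$ with $M=\mathrm{diag}(\la,\mu)$, $\la\ne\mu$ both close to $1$, and $u=(1,1)^T$, the set $A_M$ has fractal boundary and is a proper subset of its convex hull $K$. (Indeed, if $A=K$ held under the hypothesis $|\det M|\ge 2^{-1/d}$, the theorem would be trivial.) The volume balance---which should read $m\,|\det M|\ge1$, not $m\,|\det M|^d\ge1$, since $\mathrm{vol}(MK)=|\det M|\,\mathrm{vol}(K)$---is necessary for covering but far from sufficient; the translates $u_j+MK$ overlap heavily in the middle and leave genuine gaps near $\partial K$. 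No triangulation of the polytope can rescue this, because the conclusion $A=K$ is simply wrong.

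For the record, the paper's proof of Theorem~\ref{thm:nei} does not go through a sub-invariant convex body at all: it writes $A_M$ (in $\{0,1\}$ digits) as the Minkowski sum $A_{M^d}+MA_{M^d}+\cdots+M^{d-1}A_{M^d}$, uses $|\det M^d|\ge\tfrac12$ to get that $A_{M^d}$ is path connected, picks a path $\gamma\subset A_{M^d}$ through the cyclic vector $u$, and invokes a topological lemma to the effect that $d$ paths spanning $\BbR^d$ have Minkowski sum with non-empty interior. For the corollary the paper simply passes to a two-map sub-IFS and applies Theorem~\ref{thm:nei} after an affine change of coordinates. You are right that this reduction is incomplete as written: it needs some difference $u_i-u_j$ to be cyclic, and your example $M=\mathrm{diag}(\la_1,\dots,\la_d)$, $u_1=0$, $u_{k+1}=e_k$ shows that for $d\ge3$ no such pair need exist. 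So you have located a real gap in the paper's argument, but the convex-body scheme you propose is not the way to close it.
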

\begin{proof}Clearly, the attractors are nested as $m$ increases, so it suffices
to establish the claim for $m=2$. This, in turn, follows from Theorem~\ref{thm:nei}
via an affine change of coordinates.
\end{proof}

The history of the problem is as follows. In \cite{DJK} it was shown that for
$M=\begin{pmatrix} \la & 0 \\ 0 & \mu \end{pmatrix}$, if $0.953<\la<\mu<1$, then
$(0,0)$ has a neighbourhood which lies in $A_M$. Their method was a modification
of the one suggested in \cite{Gunturk}. In \cite{HS} we improved their lower
bound to $0.83$. In \cite{HS2D} we proved analogous results for all $2\times2$ matrices
$M$ by using a similar approach as in \cite{HS} for the matrices with real eigenvalues and
a different one for the rest. This second approach is the one we use
in the current paper.

To prove Theorem~\ref{thm:nei}, we need some auxiliary results. These are natural generalizations of those
from \cite[Appendix]{HS2D} whose proofs had been provided by V.~Kleptsyn \cite{MO}.
We use $+$ for the Minkowski sum of two sets:
\[
A+B=\{a+b\mid a\in A,\ b\in B\}.
\]

\begin{lemma}
\label{lem:Victor}
Let $\gamma$ be a path in $\BbR^n$.
Let $\gamma'(t_1, t_2, \cdots, t_{n-1}) =
    \gamma_1(t_1) + \dots + \gamma_{n-1}(t_{n-1})$ where the $\gamma_i$
    are paths in $\BbR^n$.
Let $\delta$ be the diameter of $\gamma([s_1,s_2])$, and assume that
    there is no point in the interior of the surface
$\sigma=\{\gamma(s) + \gamma'(t) :s,t\in\partial([s_1, s_2] \times [0,1]^{n-1})\}$.
Then the sets $\gamma(s_1) + \gamma'([0,1]^{n-1})$ and $\gamma(s_2) + \gamma'([0,1]^{n-1})$
    coincide outside a $\delta$-neighbourhood of
    $\gamma([s_1, s_2])+\gamma'(\partial([0,1]^{n-1}))$.
\end{lemma}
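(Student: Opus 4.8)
The plan is to work with the single map $F\colon Q\to\BbR^n$, $F(s,t):=\gamma(s)+\gamma'(t)$, on the $n$-dimensional box $Q:=[s_1,s_2]\times[0,1]^{n-1}$. Since $\partial Q$ is an $(n-1)$-sphere with $F(\partial Q)=\sigma$, I read the hypothesis ``there is no point in the interior of the surface $\sigma$'' as saying that filling the surface in adds nothing, i.e.\ $F(Q)=F(\partial Q)=\sigma$ (the inclusion $\supseteq$ being automatic). Writing $K:=[0,1]^{n-1}$, the three parts of $\partial Q$ give exactly the three sets in play: $G_i:=F(\{s_i\}\times K)$ for $i=1,2$, and $B:=F([s_1,s_2]\times\partial K)=\gamma([s_1,s_2])+\gamma'(\partial K)$, with $\sigma=G_1\cup G_2\cup B$. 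The one structural feature I use is that $F(s,\cdot)=F(s_1,\cdot)+\bigl(\gamma(s)-\gamma(s_1)\bigr)$, so each slice $F(\{s\}\times K)$ is a translate of $G_1$ by a vector of norm at most $\delta=\operatorname{diam}\gamma([s_1,s_2])$; in particular $G_2=G_1+\bigl(\gamma(s_2)-\gamma(s_1)\bigr)$. As $\delta$, $B$ and the hypothesis are symmetric in $s_1,s_2$, it suffices to prove that every $p\in G_1$ with $d(p,B)>\delta$ lies in $G_2$.

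So fix such a $p$ and choose $(s_1,t_0)\in\partial Q$ with $F(s_1,t_0)=p$; then $t_0\in\operatorname{int}K$, for otherwise $p\in B$. The first step is a fibre observation: the path $P(s):=F(s,t_0)=p+\bigl(\gamma(s)-\gamma(s_1)\bigr)$, $s\in[s_1,s_2]$, lies in $F(Q)=\sigma$, stays within distance $\delta$ of $p$, and hence---because $d(p,B)>\delta$---misses $B$; so $P([s_1,s_2])\subseteq G_1\cup G_2$, a connected set joining $P(s_1)=p\in G_1$ to $P(s_2)=p+(\gamma(s_2)-\gamma(s_1))\in G_2$. This already shows that $G_1$ and $G_2$ meet within distance $\delta$ of every point of $G_1$ lying outside the $\delta$-neighbourhood $N_\delta(B)$ of $B$. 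To upgrade this to $p\in G_2$ I would run a continuation argument: the set $\{\,s\in[s_1,s_2]:p\in F(\{s\}\times K)\,\}$ is closed and contains $s_1$, so if it is also open, connectedness of $[s_1,s_2]$ forces $s_2$ into it, i.e.\ $p\in G_2$; openness should come out of $F(Q)=\sigma$ by propagating a preimage of $p$ from one slice to the adjacent ones.

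I expect this openness/propagation step to be the main obstacle. Because the $\gamma_i$ are only assumed continuous, $F$ may be badly non-injective and $G_1$ need not locally separate $\BbR^n$ near $p$, so no crude perturbation of $F$ near the bottom face will do; one really does need the full strength of $F(Q)=\sigma$, and not merely that $\sigma$ homologically bounds nothing. (There are elementary plane configurations---slide a $\bot$-shaped path by a tiny horizontal vector, so that $\sigma$ becomes a long segment with two short prongs---in which $\BbR^2\setminus\sigma$ is connected yet $G_1$ and $G_2$ disagree far from $B$; what excludes such examples is precisely that for them $F(Q)$ strictly contains $\sigma$.) I would try to carry the step out either by a Brouwer-degree computation localized to a small ball about $p$, exploiting that there $F(Q)$ coincides with the lower-dimensional set $G_1$ and so the local structure of $F$ is pinned down, or by induction on $n$---peeling off the summand $\gamma_{n-1}$ and applying the $(n-1)$-dimensional version of the statement to analyse $B$ before reassembling. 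Either way, the passage from ``$G_1$ and $G_2$ are $\delta$-close off $N_\delta(B)$'' to ``$G_1$ and $G_2$ coincide off $N_\delta(B)$'' is where the real work lies.
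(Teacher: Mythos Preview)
Your reading of the hypothesis is not the paper's. The paper takes ``interior of the surface $\sigma$'' in the Jordan--Brouwer sense: $\sigma=F(\partial Q)$ is the continuous image of an $(n-1)$-sphere, and its ``interior'' is the region it encloses (equivalently, the set of points with nonzero degree with respect to $F|_{\partial Q}$). The hypothesis is that this region is empty. Your reformulation $F(Q)=\sigma$ is a different, and in general strictly stronger, condition; nothing in the statement forces the image of the solid cube to collapse onto its boundary image, and you do not argue that it does. Yet your fibre-path step uses precisely this: you need $P(s)=F(s,t_0)\in\sigma$, whereas a priori $P(s)$ is only in $F(Q)$. So the very first substantive inclusion $P([s_1,s_2])\subseteq G_1\cup G_2$ is unjustified under the intended hypothesis.

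The paper's argument is also much shorter and avoids the continuation problem you flag. It runs by contradiction: take $z\in G_1$ outside $N_\delta(B)$ with $z\notin G_2$. Since $G_2$ is compact, some ball $B_\varepsilon(z)$ misses $G_2$; shrinking $\varepsilon$, it also misses $B$. Thus near $z$ the only piece of $\sigma$ present is $G_1$. Invoking Jordan--Brouwer separation, one finds two points in $B_\varepsilon(z)$ on opposite sides of $G_1$; since $\sigma$ coincides with $G_1$ in this ball, the two points lie in different complementary components of $\sigma$, so one of them is in the interior of $\sigma$, contradicting the hypothesis. No fibre paths, no propagation, no openness step.

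Your self-diagnosis is accurate: the passage from ``$G_1$ and $G_2$ meet within $\delta$ of $p$'' to ``$p\in G_2$'' is a real gap, and neither the degree-theoretic nor the inductive patch is carried out. But before attacking that, you would first have to either justify $F(Q)=\sigma$ from the actual hypothesis or reroute the argument so it does not need it; the paper's separation argument shows that neither detour is necessary.
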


\begin{proof}
Assume the contrary and let $z$ be a point of the surface
    $\widetilde\gamma:=\gamma(s_1)+\gamma(t_1)$ (for some
    $t_1 \in [0,1]^{n-1}$) that lies outside the $\delta$-neighbourhood and that does
    not belong to the surface $\gamma(s_2)+\gamma([0,1]^{n-1})$.
By continuity, there is a $\varepsilon$-neighbourhood of $z$ that the latter surface does
    not intersect.

Now, by the Jordan-Brouwer separation theorem, in this neighbourhood one can find two
    points ``on different sides'' with respect to $\widetilde\gamma$.

This implies that one of these two points is in the interior of
    $\sigma=\{\gamma(s) + \gamma'(t) :s,t\in\partial([s_1, s_2] \times [0,1]^{n-1})\}$.
\end{proof}

\begin{prop}
\label{prop:Victor}
If $\gamma_1, \gamma_2, \cdots, \gamma_n$ be $n$ paths in $\BbR^n$ whose span is $\BbR^n$,
    then $\gamma_1 + \gamma_2 + \cdots + \gamma_n$ has non-empty interior.
\end{prop}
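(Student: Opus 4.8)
The plan is to prove the sharper, coordinate‑free fact that for \emph{arbitrary} paths $\gamma_1,\dots,\gamma_n$ in $\BbR^n$ the ``algebraic volume'' of their Minkowski sum equals $\pm\det(c_1\mid\dots\mid c_n)$, where $c_i:=\gamma_i(1)-\gamma_i(0)$; here the algebraic volume of $\Phi\colon[0,1]^n\to\BbR^n$, $\Phi(t_1,\dots,t_n)=\gamma_1(t_1)+\dots+\gamma_n(t_n)$, means $\int_{\BbR^n}\deg\bigl(\Phi,(0,1)^n,p\bigr)\,dp$, the integral over $p\notin\Phi(\partial([0,1]^n))$ of the Brouwer degree. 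Granting this identity the proposition is immediate: the hypothesis that the $\gamma_i$ span $\BbR^n$ lets us replace each $\gamma_i$ by a subarc so that the chords $c_1,\dots,c_n$ become linearly independent, which only shrinks $\gamma_1+\dots+\gamma_n$ and hence only shrinks its interior; so we may assume $\det(c_1\mid\dots\mid c_n)\neq0$. If $\Phi(\partial([0,1]^n))$ has nonempty interior we are done, since it is contained in the sum; otherwise it is nowhere dense, and on its complement $\deg(\Phi,(0,1)^n,\cdot)$ is a locally constant, compactly supported, integer‑valued function whose integral $\pm\det(c_1\mid\dots\mid c_n)$ is nonzero, so it is nonzero on a nonempty open set, which is then contained in $\Phi([0,1]^n)=\gamma_1+\dots+\gamma_n$.

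To prove the volume identity I would argue in three steps. (i) When every $\gamma_i$ is a straight segment, $\Phi$ is affine and the identity is obvious. (ii) The algebraic volume is additive under concatenation in a single slot, $V(\alpha\!\cdot\!\beta,\gamma_2,\dots,\gamma_n)=V(\alpha,\gamma_2,\dots,\gamma_n)+V(\beta',\gamma_2,\dots,\gamma_n)$ (with $\beta'$ the translate of $\beta$ beginning where $\alpha$ ends), and it changes sign when a slot is reversed --- the topological shadow of $\int_{\alpha\cdot\beta}=\int_\alpha+\int_\beta$. (iii) The algebraic volume vanishes whenever one slot is a \emph{loop}. Granting (iii), one replaces each $\gamma_i$ in turn by its chord --- the difference being a loop in that slot --- without changing $V$, and is reduced to (i). Step (iii) is where Lemma~\ref{lem:Victor} enters: subdivide the loop into short subloops of diameter at most $\delta$ and apply the lemma with such a subloop as $\gamma$ and $\gamma'=\gamma_1+\dots+\gamma_{n-1}$ the sum of the remaining paths. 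Either the cap surface $\sigma$ encloses a point, whence $\deg(\Phi,\cdot,p)\neq0$ there and we are finished, or the lemma gives that the two end faces $\gamma(s_1)+\gamma'([0,1]^{n-1})$ and $\gamma(s_2)+\gamma'([0,1]^{n-1})$ agree outside a $\delta$‑neighbourhood of $\gamma([s_1,s_2])+\gamma'(\partial([0,1]^{n-1}))$. Telescoping these equalities along the loop confines the nonvanishing of $\deg(\Phi,\cdot,\cdot)$ to a neighbourhood of $\gamma_1+\dots+\gamma_n$ evaluated on $\partial([0,1]^{n-1})$, a finite union of translates of Minkowski sums of only $n-1$ of the paths, and one checks --- by induction on the number of summed paths, the cases of zero or one path being trivial --- that such a configuration contributes algebraic volume $0$.

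I expect step (iii) to be the genuine obstacle. For merely continuous paths --- which may be non‑rectifiable and even of positive $n$‑dimensional measure --- the assertion ``a loop in one slot contributes no algebraic volume'' cannot be obtained by naive polygonal approximation, because the region swept by the approximating homotopy need not be small in measure, nor is the termwise ``cancel the two loop faces and recurse'' manoeuvre self‑contained (the residual $(n-1)$‑cycles it produces are exactly the ones whose classes record the degree). Lemma~\ref{lem:Victor} is precisely the device that replaces the missing measure‑theoretic control by the purely topological statement that the moving front $\gamma(s)+\gamma'([0,1]^{n-1})$ is frozen away from its boundary; extracting the vanishing of $V$ from its telescoped conclusion, with careful bookkeeping of orientations and a clean induction on the number of paths, is the technical heart. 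The remaining ingredients --- additivity of $V$ under concatenation, homotopy stability of the degree, and the fact that $\Phi(\partial([0,1]^n))$ is nowhere dense once it has empty interior --- are routine.
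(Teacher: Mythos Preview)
Your approach is genuinely different from the paper's. The paper argues by contradiction: assuming the Minkowski sum has empty interior, it applies Lemma~\ref{lem:Victor} directly to the full map with $\gamma=\gamma_1$ and $\gamma'=\gamma_2+\dots+\gamma_n$, lets the subinterval $[s_1,s_2]$ shrink, and concludes that $\widehat\gamma([0,1]^{n-1})$ enjoys arbitrarily small translation symmetries along the chords of $\gamma_1$; from this it infers that $\widehat\gamma([0,1]^{n-1})$ sits in a hyperplane containing $\gamma_1([0,1])$, contradicting the spanning hypothesis. Your route via Brouwer degree and the formula $V(\gamma_1,\dots,\gamma_n)=\pm\det(c_1\mid\dots\mid c_n)$ would, if it worked, give strictly more: an explicit signed volume, not just nonempty interior. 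The paper's argument is much shorter and uses Lemma~\ref{lem:Victor} only once.

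There are two genuine gaps. First, the reduction ``span $\Rightarrow$ one may pass to subarcs with linearly independent chords'' is not automatic. Take $n=3$, $\gamma_1=\gamma_2$ a segment on the $x$--axis, and $\gamma_3$ any non-planar smooth arc. The images are not contained in a common hyperplane, yet every pair of subarcs of $\gamma_1,\gamma_2$ has parallel chords; and here $\gamma_1+\gamma_2+\gamma_3$ is a two--dimensional ribbon with empty interior. So the spanning hypothesis, read literally, neither yields your reduction nor guarantees the conclusion; you must either strengthen the hypothesis (a Rado/Hall condition on the chord subspaces suffices and does hold in the paper's application, where the chords $u,Mu,\dots,M^{d-1}u$ are independent) or replace this step entirely.

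Second, your use of Lemma~\ref{lem:Victor} in step~(iii) does not do what you need. When $\gamma_n$ is a loop, the top and bottom faces $\gamma_n(0)+\widehat\gamma([0,1]^{n-1})$ and $\gamma_n(1)+\widehat\gamma([0,1]^{n-1})$ already coincide and cancel in $\Phi_*[\partial([0,1]^n)]$; hence $\deg(\Phi,\cdot,p)$ equals the winding number, about $p$, of the \emph{lateral} $(n-1)$--cycle parametrised by $(s,t')\in[0,1]\times\partial([0,1]^{n-1})\mapsto\gamma_n(s)+\widehat\gamma(t')$. Telescoping Lemma~\ref{lem:Victor} along a subdivision of the loop tells you only that the moving slices $\gamma_n(s)+\widehat\gamma([0,1]^{n-1})$ agree with one another outside a $\delta$--neighbourhood of that lateral set --- a statement which is vacuous once the loop closes up --- and says nothing about the winding of the lateral cycle around points far from it, which can perfectly well be nonzero (already for $n=2$ the two translated copies of the loop $\gamma_2$ have equal but nonzero winding about interior points). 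So ``confines the nonvanishing of $\deg(\Phi,\cdot,\cdot)$ to a neighbourhood of the lateral set'' is simply false, and with it the induction collapses. Your volume identity is true for smooth or PL paths by the one-line Jacobian computation $\int_{[0,1]^n}\det(\gamma_1'(t_1)\mid\dots\mid\gamma_n'(t_n))\,dt=\det(c_1\mid\dots\mid c_n)$, but extending it to merely continuous paths --- where $\Phi(\partial([0,1]^n))$ may have positive measure and $\deg(\Phi,\cdot,\cdot)$ need not be bounded --- requires an argument you have not supplied, and Lemma~\ref{lem:Victor} is not the right tool for it.
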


\begin{proof}
Let $t = (t_2, t_3, \dots, t_n), \gamma(t_1) = \gamma_1(t_1)$ and
    $\widehat\gamma(t) = (\gamma_2(t_2), \gamma_3(t_3), \dots, \gamma_n(t_n))$.
Consider the surface
\[
\omega:=\{\gamma(s) + \widehat\gamma(t) : (s,t)\in\partial([0,1] \times [0,1]^{n-1})\}.
\]

Let $\delta=\delta(s_1,s_2)$ be the diameter of $\gamma([s_1, s_2])$ for $s_1, s_2 \in [0,1]$.
Clearly, $\delta \to 0$ as $s_1 \to s_2$.
Pick $s_1$ and $s_2$ sufficiently close so the diameter of $\gamma_i([0,1])$
    is greater than $2\delta$ for all $i$.
Hence there exists a point on the surface $\gamma(s_1) + \widehat\gamma([0,1]^{n-1})$ that is
    not in the $\delta$-neighbourhood of $\gamma([s_1, s_2]) + \gamma(\partial([0,1]^{n-1}))$.
By Lemma~\ref{lem:Victor}, either there exists a point in the interior of this surface,
    or $\gamma(s_1) + \widehat\gamma([0,1]^{n-1})$ and $\gamma(s_2) + \widehat\gamma([0,1]^{n-1})$
    coincide outside the $\delta$-neighbourhood of
    $\gamma([s_1, s_2]) + \widehat\gamma(\partial([0,1]^{n-1}))$.

Taking $s_1 \to s_2$ and assuming that there is never a point in the interior gives
    that $\widehat\gamma([0,1]^{n-1})$ admits an arbitrarily small translation symmetry outside
    its endpoints. This in turn gives that $\widehat\gamma([0,1]^{n-1})$ is a $n-1$ dimensional plane, and that
    $\gamma([0, 1])$ lies within this plane. Hence $\gamma_1, \gamma_2, \dots, \gamma_n$
    do not span is $\BbR^n$, a contradiction.
\end{proof}

We need two more results before we can get on with the proof of Theorem~\ref{thm:nei}.

\begin{lemma}\cite[Lemma~2.3]{ShSo} \label{lem:shso}
The set $A_M$ is connected if $|\det M|\ge\frac12$.
\end{lemma}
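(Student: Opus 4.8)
My plan is to reduce the lemma to a single arithmetic fact and then attack that fact by imitating the one‑dimensional situation. The first ingredient is a classical criterion of Hata for two‑map iterated function systems: the attractor of $\{f_1,f_2\}$ is connected provided $f_1(A)\cap f_2(A)\neq\emptyset$ (and conversely, if the two pieces are disjoint then $A=f_1(A)\sqcup f_2(A)$ is already a disconnection). Here $f_1(v)=Mv-u$ and $f_2(v)=Mv+u$, so it suffices to establish $f_1(A_M)\cap f_2(A_M)\neq\emptyset$ whenever $|\det M|\ge\frac12$.

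Next I would rewrite this intersection condition arithmetically. Since $\pi_M(-a)=-\pi_M(a)$, the attractor is centrally symmetric, $A_M=-A_M$, hence
\[
A_M-A_M=A_M+A_M=\Bigl\{\textstyle\sum_{k\ge0}(a_k+b_k)M^ku:\ a_k,b_k\in\{\pm1\}\Bigr\}=2B,
\]
where $B:=\{\sum_{k\ge0}\epsilon_kM^ku:\epsilon_k\in\{-1,0,1\}\}$ is the attractor of the three‑map IFS $\{v\mapsto Mv+\epsilon u:\epsilon\in\{-1,0,1\}\}$. A point of $f_1(A_M)\cap f_2(A_M)$ is an equality $Mx-u=My+u$ with $x,y\in A_M$, that is $2u\in M(A_M-A_M)=2MB$, i.e. $u\in MB$, i.e.
\[
M^{-1}u\in B,
\]
equivalently: $u$ admits an expansion $u=\sum_{k\ge1}\epsilon_kM^ku$ with digits $\epsilon_k\in\{-1,0,1\}$. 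Thus the whole lemma is equivalent to the implication $|\det M|\ge\frac12\Rightarrow M^{-1}u\in B$.

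To prove this implication I would follow the one‑dimensional model. For $d=1$, $M=(\lambda)$ with $\frac13\le\lambda<1$, one checks directly that the interval $K_0=[-\tfrac1{1-\lambda},\tfrac1{1-\lambda}]$ satisfies $K_0\subseteq(\lambda K_0-1)\cup\lambda K_0\cup(\lambda K_0+1)$; by the standard principle that a compact set carried into the union of the branch images lies in the attractor, $K_0\subseteq B$ (in fact $B=K_0$), and $\lambda^{-1}\in K_0$ exactly when $\lambda\ge\frac12$. For general $M$ the goal is to produce a compact, centrally symmetric set $K$ — the higher‑dimensional analogue of that interval, stretched along the orbit $u,Mu,M^2u,\dots$ — with (i) $M^{-1}u\in K$ and (ii) $K\subseteq(MK-u)\cup MK\cup(MK+u)$; then (ii) gives $K\subseteq B$ and (i) concludes. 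A first candidate is a suitable dilate of the symmetric convex hull of the partial orbit sums $\{\pm u\pm Mu\pm\cdots\pm M^nu\}$; the hypothesis $|\det M|\ge\frac12$ is what should make (i) and (ii) compatible, the value $\frac12$ being critical already in dimension one.

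The main obstacle is exactly the construction and verification of $K$ in dimension $\ge 2$. Unlike in one dimension, $B$ is a genuine fractal with complicated boundary (and $K$ cannot just be taken to be the convex hull of $A_M$, which in general is not sub‑invariant), so (ii) must be established through a careful covering estimate for the three affine images of $K$, and the sharp constant $\frac12$ should enter precisely in guaranteeing that $M^{-1}u$ falls inside the part of $B$ so produced. A softer alternative for the implication is a volume/pigeonhole argument: the level‑$n$ approximants of $B$ consist of at most $3^n$ points lying in a fixed bounded region and carrying an $M^n$‑scaled copy of the structure, and since $3^n|\det M|^n\to\infty$ the corresponding $3^n$ affine images must overlap heavily; quantifying this overlap precisely enough to force $M^{-1}u$ into the limit set would be the crux of that route.
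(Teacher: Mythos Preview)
The paper does not supply its own proof of this lemma; it is simply quoted from \cite[Lemma~2.3]{ShSo}. So there is no ``paper's proof'' to match, and your proposal should be judged on its own.

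Your reduction is correct: by Hata's criterion, connectedness of $A_M$ is equivalent to $f_1(A_M)\cap f_2(A_M)\neq\emptyset$, which in turn is equivalent to $M^{-1}u\in B$, where $B$ is the attractor of the three maps $v\mapsto Mv+\epsilon u$, $\epsilon\in\{-1,0,1\}$. However, from that point on the proposal is not a proof but a programme, and you say so yourself (``The main obstacle is exactly the construction and verification of $K$\dots''). Neither route you sketch is close to closing: for (a) you give no candidate $K$ beyond a vague ``suitable dilate of the symmetric convex hull'', and in dimension $d\ge2$ there is no reason such a convex body should satisfy $K\subseteq(MK-u)\cup MK\cup(MK+u)$; for (b), the fact that $3^n|\det M|^n\to\infty$ forces overlaps among the pieces of $B$, but it does not tell you \emph{which} points lie in $B$, and in particular gives no handle on whether the specific point $M^{-1}u$ does.

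The gap is that the detour through $B$ is unnecessary and obscures a one–line volume argument applied directly to $A_M$. Choose a norm on $\mathbb R^d$ in which $\|M\|<1$ (possible since all eigenvalues have modulus $<1$), and suppose for contradiction that $f_1(A_M)\cap f_2(A_M)=\emptyset$. Pick $\epsilon>0$ smaller than half the distance between these two compact sets, and let $N$ be the open $\epsilon$-neighbourhood of $A_M$ in this norm. Then $f_i(N)\subseteq (f_i(A_M))^{\|M\|\epsilon}\subseteq N$, the two images $f_1(N),f_2(N)$ are disjoint, and their union is contained in the strictly smaller neighbourhood $A_M^{\|M\|\epsilon}\subsetneq N$. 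Taking Lebesgue measure gives
\[
\mathrm{Leb}(N)\;>\;\mathrm{Leb}\bigl(f_1(N)\bigr)+\mathrm{Leb}\bigl(f_2(N)\bigr)\;=\;2\,|\det M|\,\mathrm{Leb}(N),
\]
whence $|\det M|<\tfrac12$, the desired contrapositive (with the correct non-strict inequality in the statement). This is the standard argument and is what your pigeonhole instinct was reaching for; the point you were missing is that one should thicken $A_M$ itself rather than try to locate $M^{-1}u$ inside $B$.
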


\begin{lemma}\label{lem:odl}\cite[Lemma~4.1]{OP}
Let $Y$ be a topological space.
Suppose $f:\{m,p\}^\BbN \to Y$ is a continuous map such that
    \[ f([wm]) \cap f([wp]) \neq \emptyset \]
for all $w \in \{m, p\}^*$. (Here $m$ stands for $-1$ and $p$ for $1$.)
Then the image of $f$ is path connected.
\end{lemma}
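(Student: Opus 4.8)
The plan is to prove, by an explicit recursive construction of paths, the following statement, which is \emph{a priori} stronger: for every finite word $w\in\{m,p\}^*$ and every $a\in\{m,p\}^{\BbN}$ there is a path inside $f([w])$ from $f(w\overline m)$ to $f(wa)$, where $\overline m=mmm\cdots$. Taking $w$ empty and using that reversals and concatenations of paths are paths, this yields a path inside the image $f(\{m,p\}^{\BbN})$ between any two of its points, so the image is path connected; the word $w$ is carried along only so that the construction can call itself. (This is essentially the classical argument showing that a connected self-similar set obeying an overlap condition is arcwise connected.)

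For the construction, fix once and for all, for every $v\in\{m,p\}^*$, a point $z_v\in f([vm])\cap f([vp])$ together with expansions $z_v=f(vmc_v)=f(vpd_v)$, $c_v,d_v\in\{m,p\}^{\BbN}$. Given $(w,a)$: if $a=\overline m$, take the constant path; otherwise consider the points $y_j:=f(wa_0a_1\cdots a_{j-1}\overline m)$, $j\ge 0$, so that $y_0=f(w\overline m)$ and $y_j\to f(wa)$ by continuity of $f$. One has $y_j=y_{j+1}$ when $a_j=m$, while if $a_j=p$ we set $v=wa_0\cdots a_{j-1}$ (so $y_j=f(vm\overline m)$, $y_{j+1}=f(vp\overline m)$) and join $y_j$ to $y_{j+1}$ by concatenating a path in $f([vm])$ from $f(vm\overline m)$ to $z_v=f(vmc_v)$ with a path in $f([vp])$ from $z_v=f(vpd_v)$ to $f(vp\overline m)$ — these two paths being supplied by the same construction applied to the longer words $vm$ (with tail $c_v$) and $vp$ (with tail $d_v$). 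Laying the consecutive pieces joining $y_j$ to $y_{j+1}$ on the intervals $[1-2^{-j},1-2^{-j-1}]$ of $[0,1]$ and assigning the value $f(wa)$ at the parameter $1$ produces the candidate path for $(w,a)$.

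The one genuine point is to verify that this infinitely unfolded map is continuous on all of $[0,1]$. The mechanism is a neighbourhood-base property: every piece of the construction sitting below the pair $(w,a)$ — the piece joining $y_j$ to $y_{j+1}$, and everything into which it unfolds — has image contained in $f([wa_0\cdots a_{j-1}])$, because the two sub-paths used at that step live in $f([vm])$ and $f([vp])$ with $v=wa_0\cdots a_{j-1}$. Consequently each parameter $t$ corresponds to a nested chain of cylinders of lengths tending to infinity, hence to a single $\alpha\in\{m,p\}^{\BbN}$, the construction assigns the value $f(\alpha)$ at $t$, and a sufficiently small neighbourhood of $t$ is mapped into $f([\alpha|_n])$ for $n$ as large as one likes. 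Since $f$ is continuous, for any neighbourhood $U$ of $f(\alpha)$ the open set $f^{-1}(U)$ contains some cylinder around $\alpha$, whence a whole neighbourhood of $t$ is mapped into $U$; this is continuity at $t$.

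The hard part is precisely this last bookkeeping: arranging the nested assignment of subintervals and of limiting values so that the construction is genuinely coherent — every $t\in[0,1]$ receiving exactly one value, consistent from the left and from the right — and so that the ``confinement to arbitrarily deep cylinders near every point'' really holds at every parameter (including those interior to a constant piece and those sitting between two consecutive pieces). Once that is set up, continuity of $f$ does the rest, and no metric or further structure on $Y$ is needed.
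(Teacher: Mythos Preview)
The paper does not supply a proof of this lemma at all: it is quoted verbatim from \cite[Lemma~4.1]{OP} and used as a black box. So there is nothing in the paper to compare your argument against.

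Your argument is the standard one (essentially Hata's proof that an overlapping IFS attractor is arcwise connected, which is also how Odlyzko--Poonen argue), and the outline is correct. One small inaccuracy: it is not true that every parameter $t$ determines a \emph{single} $\alpha\in\{m,p\}^{\BbN}$. At the midpoint of a non-constant piece---where the sub-path in $f([vm])$ ending at $z_v=f(vmc_v)$ is spliced to the sub-path in $f([vp])$ starting at $z_v=f(vpd_v)$---the left and right traces give $\alpha_L=vmc_v$ and $\alpha_R=vpd_v$, which are distinct sequences. What is true (and is all you need) is that any two such limits satisfy $f(\alpha_L)=f(\alpha_R)$, and that a sufficiently small one-sided neighbourhood of $t$ maps into $f([\alpha_L|_n])$ on the left and $f([\alpha_R|_n])$ on the right for $n$ arbitrarily large; continuity of $f$ then gives continuity of the path from both sides. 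With that correction the bookkeeping you flag as ``the hard part'' goes through, since at the other interface points $t=1-2^{-j-1}$ one checks that the left and right traces actually agree ($\alpha_L=\alpha_R=wa_0\cdots a_j\overline m$).
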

Here $[i_1\dots i_k]$ is the cylinder $\{\{a_j\}_{j=1}^\infty \subset \{p, m\}^\mathbb N
\mid a_j = i_j,\ j = 1, \dots, k\}$.

Using $f := \pi_M$ and $Y = \BbR^d$, we see that $A_M$ is the image of $f$.
This gives the following corollary.
\begin{cor}\label{cor:path-connected}
The set $A_M$ is path connected if $|\det M|\ge\frac12$.
\end{cor}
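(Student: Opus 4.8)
The plan is to derive Corollary~\ref{cor:path-connected} by applying Lemma~\ref{lem:odl} with $f:=\pi_M$ and $Y=\BbR^d$, so the only thing to verify is the overlap hypothesis $\pi_M([wm])\cap\pi_M([wp])\neq\emptyset$ for every finite word $w\in\{p,m\}^*$. First I would record that each cylinder image $\pi_M([w])$ is, up to an affine map, a scaled copy of the whole attractor: if $w=w_0\dots w_{k-1}$ has length $k$, then
\[
\pi_M([w])=\sum_{j=0}^{k-1} w_j M^j u + M^k A_M,
\]
so that $\pi_M([wm])$ and $\pi_M([wp])$ are the two affine images of $A_M$ under the maps $v\mapsto c_w + M^k(Mv-u)$ and $v\mapsto c_w+M^k(Mv+u)$, where $c_w=\sum_{j<k} w_j M^ju$. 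Their intersection is non-empty precisely when the two level-one pieces $MA_M-M^ku$ and $MA_M+M^ku$ (both translated by $c_w$) meet, i.e. precisely when $(MA_M-u)\cap(MA_M+u)\neq\emptyset$, which is the statement that $A_M$ is the union of its two IFS pieces and that these two pieces overlap.

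The key step is therefore to show that the two pieces $MA_M-u$ and $MA_M+u$ of $A_M$ have non-empty intersection whenever $|\det M|\ge\frac12$. Here I would invoke Lemma~\ref{lem:shso}: under exactly this hypothesis $A_M$ is connected. Since $A_M=(MA_M-u)\cup(MA_M+u)$ is a decomposition of a connected set into two non-empty compact (hence closed) subsets, these two subsets cannot be disjoint — a connected space is not the union of two disjoint non-empty closed sets. This gives $(MA_M-u)\cap(MA_M+u)\neq\emptyset$, and translating by $c_w$ gives $\pi_M([wm])\cap\pi_M([wp])\neq\emptyset$ for all $w$, noting also that $\pi_M$ is continuous (the defining series converges uniformly since $\|M^k\|\to0$).

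With the hypotheses of Lemma~\ref{lem:odl} verified, its conclusion is that $f(\{p,m\}^\BbN)=\pi_M(\{p,m\}^\BbN)=A_M$ is path connected, which is exactly the assertion of the corollary. The main (and essentially only) obstacle is the overlap verification, and that is dispatched cleanly by the connectedness result of Lemma~\ref{lem:shso} together with the self-similar structure of the cylinder images; there are no delicate estimates involved, just the observation that a connected compact set written as a union of two closed pieces forces the pieces to intersect. I would write this out as follows.

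\begin{proof}
The map $\pi_M$ is continuous on $\{p,m\}^\BbN$, since the series defining it converges uniformly (the eigenvalues of $M$ having modulus less than $1$, we have $\|M^k\|\to0$). For a finite word $w=w_0\dots w_{k-1}\in\{p,m\}^*$ put $c_w=\sum_{j=0}^{k-1} w_j M^j u$. Then
\[
\pi_M([w])=c_w+M^k A_M,\qquad
\pi_M([wm])=c_w+M^k(MA_M-u),\qquad
\pi_M([wp])=c_w+M^k(MA_M+u).
\]
Since $M^k$ is invertible, $\pi_M([wm])\cap\pi_M([wp])\neq\emptyset$ if and only if $(MA_M-u)\cap(MA_M+u)\neq\emptyset$.

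Now assume $|\det M|\ge\frac12$. By Lemma~\ref{lem:shso} the set $A_M$ is connected. From $A_M=\pi_M([m])\cup\pi_M([p])=(MA_M-u)\cup(MA_M+u)$ we see that $A_M$ is the union of two non-empty compact, hence closed, subsets of $A_M$. A connected space cannot be written as the union of two disjoint non-empty closed subsets, so $(MA_M-u)\cap(MA_M+u)\neq\emptyset$. By the previous paragraph, $\pi_M([wm])\cap\pi_M([wp])\neq\emptyset$ for every $w\in\{p,m\}^*$.

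Thus the hypotheses of Lemma~\ref{lem:odl} are satisfied with $f=\pi_M$ and $Y=\BbR^d$, and we conclude that the image $A_M$ of $\pi_M$ is path connected.
\end{proof}
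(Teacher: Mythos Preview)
Your proof is correct and follows essentially the same approach as the paper: apply Lemma~\ref{lem:odl} with $f=\pi_M$ and $Y=\BbR^d$, using Lemma~\ref{lem:shso} to supply the connectedness that forces the two first-level pieces to intersect. The paper leaves the verification of the overlap hypothesis implicit, whereas you have written it out carefully via the self-affine cylinder structure and the closed-union argument; this is exactly the intended argument, just made explicit.
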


\begin{proof}[Proof of Theorem~\ref{thm:nei}]
Let us first change the set of ``digits'' for this particular proof.
Namely, consider the affine change of coordinates $x\mapsto \frac12\left(x+\sum_{k=0}^\infty M^k u\right)$; this
change corresponds to $a_k\mapsto \frac12(a_k+1)\in\{0,1\}$. Recall that $u$ is
chosen to be a cyclic vector.  Thus, we have
\begin{align*}
\wta_M & = \{\wpi_M(a_0a_1\dots) \mid a_k\in\{0,1\}\}\\
       &=\left\{\sum_{k=0}^\infty a_kM^ku \mid a_k\in\{0,1\}\right\}\\
&=\left\{\sum_{n=0}^\infty\sum_{j=0}^{d-1} a_{dn+j}M^{dn+j}u\mid a_{dn+j}\in\{0,1\}\right\}\\
&=\left\{\sum_{j=0}^{d-1} M^j\sum_{n=0}^\infty a_{dn+j}(M^d)^nu\mid a_{dn+j}\in\{0,1\}\right\}\\
&=\wta_{M^d}+M\cdot \wta_{M^d}+\dots + M^{d-1}\cdot\wta_{M^d},
\end{align*}
where $M\cdot X=\{Mx : x\in X\}$.
Now, if $|\det M^d|\ge\frac12$, then by Corollary~\ref{cor:path-connected},
the attractor $\wta_{M^d}$ is path connected. We have $u=\wpi_{M^d}(1000\dots)$, whence
$u\in\wta_{M^d}$. Notice that
\begin{equation}\label{eq:span}
\text{span}\{M^n u \mid n\ge0\}=\text{span}\{M^n u \mid 0\le n\le d-1\}=\mathbb R^d,
\end{equation}
since $M^n$ is a linear combination of $I,M,\dots, M^{d-1}$ for all $n\ge d$,
in view of the Cayley-Hamilton theorem.

Choose now any path $\gamma$ in $\wta_{M^d}$ which contains $u$. By (\ref{eq:span}),
the paths $\gamma, M\gamma,\dots, M^{d-1}\gamma$ span $\mathbb R^d$
as well, whence by Proposition~\ref{prop:Victor}, $\wta_M$ has non-empty interior, and thus,
so does $A_M$.
\end{proof}

\begin{rmk}
For $d=2$, Theorem~\ref{thm:nei} implies that if both eigenvalues of $M$
are greater than or equal to $2^{-1/4}\approx 0.8409$ in modulus, then
$A_M$ has non-empty interior. This is essentially \cite[Theorem~1.1]{HS2D}. Notice, however, that for
$M$ having real eigenvalues in \cite[Theorem~1.1]{HS2D} contains better bounds, due to a different
proof. In particular, if $M=\begin{pmatrix} -\la & 0 \\ 0 & \mu \end{pmatrix}$ with $0<\la\le\mu<1$,
then we have the same claim with $\la\ge2^{-1/2}\approx 0.7071$, and this bound is sharp if
$\la=\mu$.
\end{rmk}

\section{The set of uniqueness}

Let $\mathcal U_M$ be the {\em set of uniqueness} for our IFS, i.e., the set of $x\in A_M$ each of which
    has a unique address.
We let $U_M$ denote the set of unique addresses for $A_M$, so $\mathcal U_M=\pi_M(U_M)$.
For $d=1$ the set of uniqueness is a well studied topic -- see, e.g., \cite{KdV} and references
therein.

When $d=2$, the following result holds:
\begin{thm}
\label{thm:old}
Let $M$ be a contractive $2 \times 2$ matrix which we assume to be -- after an appropriate
change of coordinates - one of the following:
\begin{enumerate}
\item $M = \begin{pmatrix}\la & 1 \\ 0 & \la \end{pmatrix}$.
    For any $\la \neq 0$, the set of
    uniqueness has positive Hausdorff dimension. \cite[Corollary 4.8]{HS2D}.
\item $M = \begin{pmatrix}\la_1 & 0 \\ 0 & \la_2 \end{pmatrix}$.
      For any $0 < \la_1 < \la_2 < 1$, the set of uniqueness has positive Hausdorff dimension.
    \cite[Corollary 4.3]{HS}.\label{case2}
\item $M = \begin{pmatrix}\la_1 & 0 \\ 0 & \la_2 \end{pmatrix}$.
    For any $-1 < \la_1 < 0 < \la_2 < 1$ with $|\la_1| \neq |\la_2|$, the set of
    uniqueness has positive Hausdorff dimension. \cite[Corollary 4.5]{HS2D}.\label{case3}
\item $M = \begin{pmatrix}a & b \\ -b & a \end{pmatrix}$ with $\ka = a+b i$.
      For any $\ka$ with $\arg(\ka)/\pi \not\in \BbQ$ the set of
      uniqueness has positive Hausdorff dimension. \cite[Section 4.3.1]{HS2D}.
\item $M = \begin{pmatrix}a & b \\ -b & a \end{pmatrix}$ with $\ka = a+b i$.
      For any $\ka$ with $\arg(\ka)/\pi \in \BbQ$ set $q >0$ minimal such that
          $\ka^q \in \BbR$ and let $\be = |\ka|^{-q}$.
      Then the set of uniqueness $\mathcal U_M$ is as follows:
      \begin{enumerate}
      \item finite non-empty if $\be\in (1,G]$;
      \item infinite countable for $\be\in(G,\be_*)$;
      \item an uncountable set of zero Hausdorff dimension if
      $\be=\be_*$; and
      \item a set of positive Hausdorff dimension for
      $\be\in (\be_*,\infty)$.
\end{enumerate}
      \cite[Theorem 4.16]{HS2D}.
\item \label{case:signs} $M=\begin{pmatrix} -\la & 0 \\ 0 & \la \end{pmatrix}$ with $0<\la<1$. Then we have
      the same claim as in the previous item with $\be=\la^{-2}$. \cite[Proposition~4.21]{HS2D}
\end{enumerate}
\end{thm}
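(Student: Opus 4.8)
Since Theorem~\ref{thm:old} collects results from \cite{HS} and \cite{HS2D}, the plan is to handle all six cases through one reduction and then read each off from the one–dimensional theory of unique $\be$-expansions. The reduction is this: after the affine recoding of Section~1, two distinct addresses $a=(a_k)$, $a'=(a'_k)\in\{0,1\}^{\BbN}$ give the same point iff the difference $c_k:=a_k-a'_k\in\{-1,0,1\}$ is not identically zero and $\sum_{k\ge0}c_kM^ku=0$, with $c_k\ne0$ only where $a_k\ne a'_k$. Hence $x=\wpi_M(a)$ is non-unique exactly when $a$ admits such a nonzero admissible ``$\pm1$''-pattern, so $U_M$ is the set of addresses meeting no relation $\sum c_kM^ku=0$. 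Since $\wpi_M$ restricted to $U_M$ is a bi-H\"older bijection onto $\mathcal U_M$, the two sets have the same cardinality type and $\dim_H\mathcal U_M>0\iff\dim_HU_M>0$ (and likewise both vanish together), so it is enough to study $U_M$ symbolically.

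For the diagonal cases I would write $M=\mathrm{diag}(\la_1,\la_2)$, $u=(u_1,u_2)$ with $u_1u_2\ne0$ (cyclicity); a relation $\sum c_kM^ku=0$ means $\sum c_k\la_1^k=0$ \emph{and} $\sum c_k\la_2^k=0$ for the same pattern. In (2) and (3) the hypothesis $\la_1\ne|\la_2|$ makes this double constraint rigid enough that one can exhibit an explicit positive-entropy subshift of admissible addresses in which every nonzero $\{-1,0,1\}$-combination has a nonvanishing term in at least one of the two expansions; this yields $\dim_H\mathcal U_M>0$ for all admissible parameters. Case (6), $M=\mathrm{diag}(-\la,\la)$, decouples cleanly: adding and subtracting the two equations gives $\sum_jc_{2j}\be^{-j}=0$ and $\sum_jc_{2j+1}\be^{-j}=0$ with $\be=\la^{-2}$, so via the even/odd interleaving homeomorphism $U_M$ is precisely the Cartesian square of the one-dimensional set $\widehat U_\be$ of unique $\be$-expansions. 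The four regimes of item (5) then drop out of the Glendinning--Sidorov/Komornik--Loreti dichotomy for $\widehat U_\be$: cardinality gives the finite range ($\be\le G$) and the countable range ($G<\be<\be_*$); $\dim_H(\widehat U_\be\times\widehat U_\be)\ge2\dim_H\widehat U_\be>0$ gives positive dimension for $\be>\be_*$; and at $\be=\be_*$ the uncountable square still has zero Hausdorff dimension because $\widehat U_{\be_*}$ has zero box dimension.

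The other cases are variations. In (1), the Jordan block gives $M^ku=(\la^ku_1+k\la^{k-1}u_2,\ \la^ku_2)$, and a collision forces both $\sum c_k\la^k=0$ and $\sum c_kk\la^{k-1}=0$, i.e.\ $\la$ is a common root of the difference power series $P(x)=\sum c_kx^k$ and of $P'(x)$; this codimension-two constraint is dodged by a positive-entropy block subshift for every $\la\ne0$, so $\dim_H\mathcal U_M>0$. Cases (4) and (5) hinge on the arithmetic of $\ka$: a collision forces $\ka$ to be a root of a nonzero $\{-1,0,1\}$-polynomial. When $\arg\ka/\pi\notin\BbQ$ the rotation part of $M$ is minimal, so a local disjointness of cylinder images propagates to all small scales and produces a positive-entropy collision-free subshift, giving $\dim_H\mathcal U_M>0$; when $\arg\ka=\pi p/q$ one blocks digits in groups of length $q$ and uses $M^q=\pm|\ka|^qI$ to reduce to the one-dimensional base-$\be$ problem with $\be=|\ka|^{-q}$, which carries the same four-regime dichotomy.

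The main obstacle, wherever positivity of dimension is asserted, is exactly the regime where the underlying one-dimensional base lies in $(1,G]$: there the scalar uniqueness set is just two points, so nothing can be imported and one must genuinely use the two coordinates together with the arithmetic independence of the contraction data, building an \emph{explicit} positive-entropy subshift and verifying by hand that no nonzero $\{-1,0,1\}$-pattern in it annihilates $u$. The subsidiary delicate point is the critical value $\be=\be_*$ in (5)--(6), where one needs the stronger (box-dimension) smallness of $\widehat U_{\be_*}$, together with the bi-H\"older comparison of the symbolic and Euclidean metrics on $U_M$ that makes it legitimate to phrase everything for $\mathcal U_M\subset\BbR^2$.
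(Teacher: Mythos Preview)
The paper does not prove Theorem~\ref{thm:old}: it is stated purely as a summary of earlier work, with each of the six items carrying an explicit citation to the place in \cite{HS} or \cite{HS2D} where it is established. There is therefore no ``paper's own proof'' to compare your proposal against; you already acknowledge this in your opening sentence.

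As an outline of the underlying arguments your sketch is largely on target, especially in case~(6), where the linear change of variables $(x_1,x_2)\mapsto\bigl(\tfrac{x_1+x_2}{2},\tfrac{x_2-x_1}{2\la}\bigr)$ indeed identifies $\mathcal U_M$ with an affine copy of the Cartesian square of the one-dimensional uniqueness set in base $\be=\la^{-2}$, and your remark that zero \emph{box} dimension at $\be=\be_*$ is what controls the product is the right subtlety. In case~(5), however, ``block digits in groups of length~$q$'' is misleading: because $M,\dots,M^{q-1}$ are genuine rotations and not $\pm I$, one cannot decouple into $q$ independent base-$\be$ expansions as in case~(6). The mechanism actually used in \cite{HS2D} (and mirrored in this paper in Lemmas~\ref{lem:jordan} and~\ref{lem:two-rational}) is to pass to an extremal face of $A_M$ on which the digits at positions $j$ with $q\nmid j$ are \emph{forced}, leaving only the positions $q\mid j$ free; that face is then an affine image of the one-dimensional attractor in base $\be=|\ka|^{-q}$, and uniqueness transfers in both directions.

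For cases (1)--(4) you have honestly flagged the real difficulty: when the associated scalar base lies in $(1,G]$ the one-dimensional theory gives only two unique addresses and nothing can be imported, so a genuinely two-dimensional positive-entropy subshift avoiding all collision relations must be \emph{constructed}. That construction is the substantive content of the cited results, and your proposal asserts its existence rather than supplying it. A smaller caveat: the blanket claim that $\wpi_M|_{U_M}$ is bi-H\"older is not automatic for overlapping self-affine IFS (the inverse bound requires a separation estimate on the uniqueness set), and the cited proofs in fact bypass this by exhibiting a sub-IFS satisfying the open set condition rather than by transferring dimension from the symbolic space.
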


Here $G=\frac{1+\sqrt5}2$ and
    $\be_*\approx 1.7872$ is the \textit{Komornik-Loreti constant} introduced
    in \cite{KL}.
The Komornik-Loreti constant is defined as the
unique solution of the equation $\sum_{n=1}^{\infty}\mathfrak{m}_{n}x^{-n+1}=1$,
where $\mathfrak{m}=(\mathfrak{m}_n)_1^\infty$ is the Thue-Morse
sequence
$$
\mathfrak{m}=0110\,\,1001\,\,1001\,\,0110\,\,1001\,\,0110\dots,
$$
i.e., the fixed point of the substitution $0\to01,\ 1\to10$.

The following result is straightforward.
\begin{lemma}\label{lem:block}
Let $M$ be a block matrix, i.e.,
\[
M=\begin{pmatrix} M_1 & 0 \\ 0 & M_2 \end{pmatrix}.
\]
Then $U_M\supset U_{M_j}$ for $j\in\{1,2\}$.
\end{lemma}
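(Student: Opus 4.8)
The plan is to exploit the fact that, for a block-diagonal $M$, the coding map $\pi_M$ splits as the pair formed by the coding maps of the two blocks. Write $d_1,d_2$ for the sizes of $M_1,M_2$, so that $d=d_1+d_2$, and split the cyclic vector accordingly as $u=(u_1,u_2)$ with $u_j\in\BbR^{d_j}$. First I would record that each $u_j$ is necessarily cyclic for $M_j$: since $M^nu=(M_1^nu_1,M_2^nu_2)$, projecting the identity $\operatorname{span}\{M^nu\mid n\ge0\}=\BbR^d$ onto the $j$-th block of coordinates yields $\operatorname{span}\{M_j^nu_j\mid n\ge0\}=\BbR^{d_j}$. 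Hence the IFS $\{M_jv-u_j,\,M_jv+u_j\}$ satisfies the standing hypotheses and $U_{M_j}$ is well defined.

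Next, since $M^k=\begin{pmatrix}M_1^k&0\\0&M_2^k\end{pmatrix}$, for every sequence $a=a_0a_1\dots\in\{\pm1\}^{\BbN}$ one has
\[
\pi_M(a)=\sum_{k\ge0}a_kM^ku=\Bigl(\sum_{k\ge0}a_kM_1^ku_1,\ \sum_{k\ge0}a_kM_2^ku_2\Bigr)=\bigl(\pi_{M_1}(a),\pi_{M_2}(a)\bigr).
\]
Consequently $\pi_M(a)=\pi_M(b)$ if and only if both $\pi_{M_1}(a)=\pi_{M_1}(b)$ and $\pi_{M_2}(a)=\pi_{M_2}(b)$ hold; in particular it forces $\pi_{M_j}(a)=\pi_{M_j}(b)$ for each $j$.

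Finally I would conclude: fix $j\in\{1,2\}$ and let $a\in U_{M_j}$, i.e.\ $\pi_{M_j}(b)=\pi_{M_j}(a)$ implies $b=a$. If $\pi_M(b)=\pi_M(a)$, then by the displayed factorisation $\pi_{M_j}(b)=\pi_{M_j}(a)$, hence $b=a$, so $a\in U_M$. This gives $U_{M_j}\subset U_M$ for $j\in\{1,2\}$, as required. I do not expect any genuine obstacle here: the only step worth a sentence is the block decomposition of $\pi_M$, which is immediate from block matrix multiplication, and this is precisely why the statement is flagged as straightforward. One might add the remark that the inclusion can be strict, since a sequence whose two block images each admit a rival, but with \emph{different} rivals, still has a unique full address.
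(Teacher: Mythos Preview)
Your proof is correct and follows exactly the paper's approach: the key observation is the block decomposition $\pi_M(a)=\bigl(\pi_{M_1}(a),\pi_{M_2}(a)\bigr)$, from which uniqueness in either coordinate forces uniqueness for $\pi_M$. Your extra verification that each $u_j$ is cyclic for $M_j$, and your closing remark on strictness, are pleasant additions the paper omits.
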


\begin{proof}
Notice that
\[
\pi_M(a_0 a_1 a_2 \dots) =
\begin{pmatrix} \pi_{M_1}(a_0a_1\dots) \\ \pi_{M_2}(a_0a_1\dots)
\end{pmatrix}.
\]
We see that if one of the two coordinates on the right hand side is
    unique, then the left hand side must also be unique.
\end{proof}

\begin{cor}
If $\dim_H \U_{M_1} > 0$ or
   $\dim_H \U_{M_2} > 0$, then
   $\dim_H \U_{M} > 0$.
\end{cor}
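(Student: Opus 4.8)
The plan is to reduce the statement to Lemma~\ref{lem:block} together with two elementary facts about Hausdorff dimension: it is monotone under set inclusion, and it does not increase under Lipschitz maps. Write $d_j$ for the size of the block $M_j$, so that $d=d_1+d_2$, and let $P_1,P_2$ be the coordinate projections of $\BbR^d$ onto the first $d_1$ and the last $d_2$ coordinates, respectively; both are $1$-Lipschitz.

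First I would record, from the displayed formula in the proof of Lemma~\ref{lem:block}, that $P_j\circ\pi_M=\pi_{M_j}$ on $\{p,m\}^\BbN$. Consequently $P_j\bigl(\pi_M(U_{M_j})\bigr)=\pi_{M_j}(U_{M_j})=\U_{M_j}$, and, since Lemma~\ref{lem:block} gives $U_{M_j}\subset U_M$, also $\pi_M(U_{M_j})\subset\pi_M(U_M)=\U_M$.

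Now suppose $\dim_H\U_{M_1}>0$ (the case of $M_2$ being identical with $P_2$ in place of $P_1$). Then
\[
0<\dim_H\U_{M_1}=\dim_H P_1\bigl(\pi_M(U_{M_1})\bigr)\le\dim_H\bigl(\pi_M(U_{M_1})\bigr)\le\dim_H\U_M,
\]
the first inequality because $P_1$ is Lipschitz and the second because $\pi_M(U_{M_1})\subset\U_M$. This is exactly the assertion.

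There is essentially no obstacle here; the only point requiring a moment's care is that $\U_{M_j}$ and $\U_M$ live in different ambient spaces ($\BbR^{d_j}$ versus $\BbR^d$), which is why one compares them through the projections $P_j$ rather than directly. Note also that $\pi_M(U_{M_j})$ is not a Cartesian product of uniqueness sets but rather a ``graph-type'' subset of $\U_M$, so passing through the projection is the natural route.
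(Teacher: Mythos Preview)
Your argument is correct and is precisely the natural elaboration of what the paper leaves implicit: the corollary is stated there without proof, as an immediate consequence of Lemma~\ref{lem:block}. Your careful use of the projections $P_j$ to bridge the ambient spaces $\BbR^{d_j}$ and $\BbR^d$, together with monotonicity and the Lipschitz bound for Hausdorff dimension, is exactly the right way to make the passage from $U_{M_j}\subset U_M$ (symbolic) to $\dim_H\U_{M_j}\le\dim_H\U_M$ (geometric) rigorous.
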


\begin{rmk}
Note that this claim is not if and only if.
To see this, take, for instance, $M_1$ and $M_2$ both $1 \times 1$ real matrices with positive
eigenvalues $\la\in\bigl(\frac{\sqrt5-1}2,1\bigr)$ and $\mu\in\bigl(\frac{\sqrt5-1}2,1\bigr)$
   with $\la \neq \mu$.
Then $\dim_H \U_{M}>0$ (\cite[Corollary~4.3]{HS}),
whereas $\mathcal U_{M_1}$ and $\mathcal U_{M_2}$ are finite -- see \cite{EJK}.
\end{rmk}

By converting a matrix $M$ to Jordan normal form, this gives a rich
    family of matrices for which $\dim_H \U_{M} > 0$.
In particular, this allows us to prove
\begin{thm}
\label{thm:new}
Let $M$ be a $d \times d$ matrix.
\begin{enumerate}
\item If $M$ has a non-trivial Jordan block, then $\dim_H \mathcal U_{M} > 0$.
\label{case:jordan}
\item If $M$ has an eigenvalue $\ka$ with $\arg(\ka)/\pi \not\in \BbQ$, then
      $\dim_H \mathcal U_{M} > 0$.
\label{case:irrational}
\item If $M$ has two eigenvalues $\ka_1$ and $\ka_2$ with $|\ka_1| \neq |\ka_2|$, then
      $\dim_H \mathcal U_{M} > 0$.
\label{case:different}
\item Let $M$ have only distinct simple eigenvalues, $\ka_1, \ka_2, \dots, \ka_d$
    with $\arg(\ka_j)/\pi \in \BbQ$ for all $j$.
    Assume further $|\ka_1| = \dots = |\ka_d|$.
    Let $q\in\mathbb N$ be minimal such that $\ka_j^q \in \BbR, 1\le j\le d$.
    If there exists $j$ and $k$ such that $\kappa_j^q \kappa_k^q < 0$, then
        put $\beta = |\ka_1|^{-2q}$, otherwise put $\beta = |\ka_1|^{-q}$.
    Then the set of uniqueness $\mathcal U_M$ is as follows:
\label{case:rational}
      \begin{enumerate}
      \item finite non-empty if $\be\in (1,G]$;
      \item infinite countable for $\be\in(G,\be_*)$;
      \item an uncountable set of zero Hausdorff dimension if
      $\be=\be_*$; and
      \item a set of positive Hausdorff dimension for
      $\be\in (\be_*,\infty)$.
\end{enumerate}
\end{enumerate}
\end{thm}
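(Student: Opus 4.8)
The plan is to reduce everything, through (real) Jordan normal form, to the two‑ and one‑dimensional situations already settled in Theorem~\ref{thm:old}, using two monotonicity devices. The first is Lemma~\ref{lem:block}. The second is a \emph{quotient principle}: if $\phi\colon\BbR^d\to\BbR^{d'}$ is a surjective linear map, $M'$ a $d'\times d'$ matrix with $\phi M=M'\phi$, and $u':=\phi u$, then $\phi(\pi_M(a))=\sum_k a_k(M')^ku'=\pi_{M'}(a)$ for every address $a$; hence a point with a unique $M'$‑address pulls back to points with unique $M$‑addresses, so $U_{M'}\subseteq U_M$ and $\mathcal U_{M'}\subseteq\phi(\mathcal U_M)$, and, $\phi$ being Lipschitz, $\dim_H\mathcal U_M\ge\dim_H\mathcal U_{M'}$. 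We use this only when $\phi$ is the projection along an $M$‑invariant subspace which is a sum of ``eigen‑pieces'' (real eigenlines and real $2$‑planes of conjugate pairs): since a cyclic vector has a non‑zero component in every eigen‑piece, $u'$ is then cyclic for $M'$, so Theorem~\ref{thm:old} and the one‑dimensional theory may be quoted on $M'$. Finally, conjugating $M$ by an invertible real matrix changes neither the hypotheses nor $\dim_H\mathcal U_M$ (it multiplies $\mathcal U_M$ by a bi‑Lipschitz map).

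\emph{Cases (1)--(3).} For (1), a Jordan block $J_k(\la)$ with $k\ge2$ occurs as a block of $M$, and on $\BbR^k$ the quotient by the span of the first $k-2$ coordinate vectors carries the $2\times2$ Jordan block; Lemma~\ref{lem:block}, the quotient principle and Theorem~\ref{thm:old}(1) finish it. If $M$ has no non‑trivial Jordan block it is semisimple, and then, $u$ being cyclic, its eigenvalues are distinct and simple; this is assumed henceforth. For (2), project onto the real $2$‑plane of $\{\ka,\bar\ka\}$; there $M$ is conjugate to $\begin{pmatrix}a&b\\-b&a\end{pmatrix}$ with $\ka=a+bi$ and $\arg(\ka)/\pi\notin\BbQ$, so Theorem~\ref{thm:old}(4) applies. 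For (3) we may (by (2)) assume every $\arg(\ka_j)/\pi$ rational; projecting onto the eigen‑pieces of $\ka_1,\ka_2$ gives $M'=\operatorname{diag}(M_1,M_2)$ with $M_i$ a real eigenline or a rotation--scaling of modulus $r_i=|\ka_i|$ and $r_1\ne r_2$. If both $M_i$ are $1\times1$ this is a real diagonal $2\times2$ matrix with eigenvalues of distinct moduli: Theorem~\ref{thm:old}(2)--(3) apply, after conjugating by $\operatorname{diag}(1,-1)$ to reduce the case of two negative eigenvalues to that of two positive ones. A rotation--scaling block $M_i$ with $r_i^{-q_i}>\be_*$ ($q_i$ minimal with $\ka_i^{q_i}\in\BbR$) is covered by item (5) of Theorem~\ref{thm:old}, and a real $\ka_i$ with $|\ka_i|^{-1}>\be_*$ by the one‑dimensional theory. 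The residual sub‑case — all moduli so close to $1$ that no single block has a positive‑dimensional set of uniqueness, yet $r_1\ne r_2$ — is a genuine ``two scales'' situation, handled by re‑running the Cantor‑set‑of‑unique‑addresses construction of \cite{HS} (the proof of Theorem~\ref{thm:old}(2)) with one block possibly a rotation--scaling.

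\emph{Case (4).} After a real change of coordinates, $M=rO$ with $r=|\ka_1|$ and $O$ orthogonal of finite order. With $q$ as in the statement, $O^q$ is an involution $S$; put $(q',c)=(q,r^q)$ if $S=\pm I$ and $(q',c)=(2q,r^{2q})$ if $S\notin\{\pm I\}$, so that $M^{q'}=\varepsilon c\,I$ with $\varepsilon\in\{\pm1\}$ and $c=\be^{-1}$ for the $\be$ of the statement. Split an address $a\in\{-1,1\}^{\BbN}$ into its $q'$ arithmetic subsequences $a^{(j)}=(a_{q'n+j})_{n\ge0}$; then $\pi_{M^{q'}}(a^{(j)})=t_ju$, where $t_j=\sum_n a^{(j)}_n(\varepsilon c)^n$ runs over the attractor of $\{x\mapsto\varepsilon cx\pm1\}$, and $\pi_M(a)=\sum_{j=0}^{q'-1}t_jv_j=:L(t_0,\dots,t_{q'-1})$ with $v_j:=M^ju$ and $L\colon\BbR^{q'}\to\BbR^d$ the surjective linear map $Le_j=v_j$. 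Hence $x\in\mathcal U_M$ precisely when $x$ has exactly one $L$‑preimage $(t_j)$ inside the $q'$‑fold product of that attractor and every $t_j$ has a unique (possibly negative‑base) $\be$‑expansion. Feeding in the one‑dimensional facts about the set of points with a unique $\be$‑expansion — finite for $\be\in(1,G]$, countably infinite for $\be\in(G,\be_*)$, uncountable of Hausdorff dimension $0$ at $\be=\be_*$, of positive Hausdorff dimension for $\be>\be_*$ — delivers the four regimes. For the last, fix all but $d$ of the $t_j$ equal to $0$ and choose the free indices so that the corresponding $v_j$ form a basis of $\BbR^d$ (possible since $u$ is cyclic): on that slice $L$ is a linear isomorphism, the slice maps into $\mathcal U_M$, and so $\dim_H\mathcal U_M\ge d\cdot\dim_H(\text{one‑dimensional uniqueness set})>0$.

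The main obstacle is exactly the last two sentences. When $d<q'$ — the typical situation once $O$ has a genuine rotation — the kernel of $L$ is non‑trivial, so a tuple $(t_j)$ need not be alone on its $L$‑fibre inside the $q'$‑fold product, and one must show (i) that enough tuples in the product of one‑dimensional uniqueness sets \emph{are} alone on their fibre to keep $\dim_H\mathcal U_M>0$ for $\be>\be_*$, and (ii) that the non‑uniqueness contributed by $\ker L$ does not inflate $\mathcal U_M$ past a countable (resp.\ zero‑dimensional) set when $\be\in(G,\be_*)$ (resp.\ $\be=\be_*$). Both require the fine combinatorics of unique $\be$‑expansions — Thue--Morse admissibility and the Komornik--Loreti constant — rather than a bare dimension count, and amount to the higher‑dimensional analogue of the analysis of $2\times2$ rotations in \cite[Theorem 4.16]{HS2D}. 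A secondary obstacle is the residual ``two scales'' sub‑case of (3): it cannot be shortcut by passing to a power, since $\dim_H\mathcal U_{N^k}>0$ does not imply $\dim_H\mathcal U_N>0$ — already for $N$ a $1\times1$ block with $\be\in(\sqrt{\be_*},\be_*]$, where $\mathcal U_N$ is finite while $\mathcal U_{N^2}$ has positive dimension — so the construction of \cite{HS} must genuinely be re‑run in the presence of a rotation--scaling block.
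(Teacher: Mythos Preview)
Your reduction strategy is sound and your self-diagnosed obstacles are real, but the paper dissolves both with a device you have not found. Rather than letting all $q'$ arithmetic subsequences of an address vary freely and then fighting the kernel of your map $L$, the paper \emph{freezes} the digits at the positions $j$ with $q\nmid j$: one sets $a_j=\operatorname{sgn}\Im(\ka_i^j)$ for the first $i$ with $\Im(\ka_i^j)\ne0$, so that only the positions $j\equiv0\pmod q$ remain free (this is Lemma~\ref{lem:two-rational}, and the same idea drives the complex half of Lemma~\ref{lem:jordan}). The map $\psi$ that inserts the free digits into this frozen template is an injective H\"older map from $U_{M'}$ into $U_M$, where $M'=\operatorname{diag}(\ka_1^q,\dots,\ka_d^q)$ has purely real entries; conversely, if $a\in U_M$ then every arithmetic subsequence $(a_{nq+j})_{n\ge0}$ lies in $U_{M'}$, so $U_M$ embeds into $(U_{M'})^q$. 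This two-sided comparison transfers not only positivity of $\dim_H$ but the full finite / countable / uncountable-of-dimension-zero / positive-dimension classification from $M'$ to $M$.

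With this in hand your obstacles disappear. For case~(3), $M'$ is real diagonal with $|\ka_1^q|\ne|\ka_2^q|$, so Theorem~\ref{thm:old}(2)--(3) applies directly to a $2\times2$ sub-block; no re-running of the \cite{HS} construction with a rotation factor is needed. Your scalar counterexample with $\be\in(\sqrt{\be_*},\be_*]$ does not bite: there $q=1$ and the lemma is vacuous. The point is not that arbitrary powers preserve $\dim_H\mathcal U>0$, but that the digit positions not divisible by this particular $q$ are forced by the imaginary-part structure, which is exactly what an arbitrary power lacks. For case~(4), the kernel of $L$ never enters because the non-free digits are determined rather than parametrised; one has $M'=\la^qJ$ with $J=\operatorname{diag}(\pm1,\dots,\pm1)$, and the conclusion follows from Theorem~\ref{thm:old}(\ref{case:signs}) if $J$ contains both signs, or from the one-dimensional theory \cite{GS} if $J=\pm I$. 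Finally, your treatment of case~(1) has a small gap: a non-trivial complex Jordan block with $\arg(\ka)/\pi\in\BbQ$ is not covered by projecting to a real $2\times2$ Jordan block and quoting Theorem~\ref{thm:old}(1); the paper handles this case separately, again via the frozen-digit map applied to the $2\times2$ complex Jordan block.
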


Some of these follow directly from Theorem~\ref{thm:old} and Lemma~\ref{lem:block}.
In Section~\ref{sec:jordan block} we show the case of Jordan blocks of size greater than
    or equal to $3$, and Jordan blocks of complex eigenvalues.
That is, we show Theorem~\ref{thm:new}~(\ref{case:jordan}).
Theorem~\ref{thm:new}~(\ref{case:irrational}) follows directly from Theorem~\ref{thm:old}
    and Lemma~\ref{lem:block}.
In Section~\ref{sec:simple block} we prove cases (\ref{case:different}) and
    (\ref{case:rational}).

There is a natural correspondence between a $2 \times 2$ real matrix
    $\begin{pmatrix} a & b \\ -b & a \end{pmatrix}$ and
     the $1 \times 1$ complex matrix $\begin{pmatrix} a+ bi \end{pmatrix}$.
For notational reasons, we will often use this second form for a matrix or sub-matrix
    corresponding to a complex eigenvalue of $M$.

\section{Jordan blocks}
\label{sec:jordan block}

\begin{lemma}\label{lem:jordan}
Let \[ M = \begin{pmatrix}
\kappa & 1       &        &         &  0 \\
        & \kappa & 1      &         &  \\
        &         & \ddots & \ddots  &  \\
        &         &        & \kappa & 1  \\
  0     &         &        &         & \kappa
\end{pmatrix} \]
with $0 < |\kappa| < 1$.
Then $\dim_H \U_M >0$.
\end{lemma}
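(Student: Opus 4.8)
The plan is to reduce Lemma~\ref{lem:jordan} to the already-known $2\times2$ Jordan block case, namely Theorem~\ref{thm:old}(1), by passing to a suitable quotient rather than a subspace. The first step is to record a quotient version of Lemma~\ref{lem:block}: if $W\subset\BbR^d$ is an $M$-invariant subspace and $Q\colon\BbR^d\to\BbR^d/W$ is the quotient map, then $Q$ intertwines $M$ with the induced map $\overline M$ on $\BbR^d/W$, so $Q\circ\pi_M=\pi_{\overline M}$ for the IFS $\{\overline M v\pm Q(u)\}$. Exactly as in Lemma~\ref{lem:block}, this gives $U_{\overline M}\subseteq U_M$ (if $\pi_M(a)=\pi_M(b)$ then applying $Q$ forces $\pi_{\overline M}(a)=\pi_{\overline M}(b)$). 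Since $Q$ is linear, hence Lipschitz, and $Q(\pi_M(U_{\overline M}))=\pi_{\overline M}(U_{\overline M})=\mathcal U_{\overline M}$, one obtains $\dim_H\mathcal U_M\ge\dim_H\pi_M(U_{\overline M})\ge\dim_H\mathcal U_{\overline M}$ — all of this provided $Q(u)$ is still a cyclic vector for $\overline M$, so that $\{\overline M v\pm Q(u)\}$ is a legitimate non-degenerate IFS of the type considered in the paper.

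The second step is to pick the right $W$ and check the cyclicity proviso. Write $M=\kappa I+N$ with $N$ the superdiagonal nilpotent ($Ne_j=e_{j-1}$, $Ne_1=0$). Then $M$ is non-derogatory, so its invariant subspaces form the chain $0\subset\langle e_1\rangle\subset\langle e_1,e_2\rangle\subset\dots\subset\BbR^d$; take $W=\langle e_1,\dots,e_{d-2}\rangle$. On the two-dimensional quotient, in the basis $\bar e_{d-1},\bar e_d$, the induced map is precisely $\overline M=\begin{pmatrix}\kappa&1\\0&\kappa\end{pmatrix}$, a contractive $2\times2$ matrix. Now $u=\sum u_je_j$ is cyclic for $M$ exactly when $u_d\neq0$ (since $N^{d-1}u=u_de_1$, $N^{d-2}u=u_{d-1}e_1+u_de_2$, \dots, so $\{u,Nu,\dots,N^{d-1}u\}$ is triangular with "diagonal" $u_d$), and then $Q(u)=u_{d-1}\bar e_{d-1}+u_d\bar e_d$ has nonzero $\bar e_d$-component, which is exactly the condition for $Q(u)$ to be cyclic for the $2\times2$ Jordan block $\overline M$. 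Hence the proviso of the first step holds. Finally, invoking Theorem~\ref{thm:old}(1) gives $\dim_H\mathcal U_{\overline M}>0$; here one uses that for a single Jordan block the quantity $\dim_H\mathcal U$ does not depend on the choice of cyclic vector, because any two cyclic vectors $u,u'$ satisfy $u'=p(M)u$ for a polynomial $p$ with $p(M)$ invertible and commuting with $M$, so $p(M)$ is a linear conjugacy between the two IFS and linear maps preserve Hausdorff dimension. (Equivalently, one may reduce at the outset to $u=e_d$, whence $Q(u)=\bar e_d$ is the standard cyclic vector.) Chaining the inequalities, $\dim_H\mathcal U_M\ge\dim_H\mathcal U_{\overline M}>0$. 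The companion statement for real Jordan blocks attached to a complex eigenvalue will go the same way: quotient down to a single $2\times2$ block $\begin{pmatrix}a&b\\-b&a\end{pmatrix}$ and apply Theorem~\ref{thm:old}(4)--(5) in place of (1).

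The only genuinely delicate point — the "main obstacle", modest as it is — is the bookkeeping around the cyclic vector. A Jordan block's cyclic vectors live "at the bottom" (they are exactly the $u$ with $u_d\neq0$), so they die in every proper $M$-invariant subspace; this is why one must work with quotients, not restrictions, and must then verify that cyclicity, i.e.\ non-degeneracy of the resulting IFS, survives the quotient. What makes everything fit together is precisely the chain structure of the invariant subspaces of a non-derogatory matrix, which forces the relevant two-dimensional quotient to carry exactly a $2\times2$ Jordan block with the same eigenvalue.
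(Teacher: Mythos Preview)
Your quotient argument for real $\kappa$ is correct and is exactly the paper's proof in different language: projecting onto the last two coordinates \emph{is} the quotient by $W=\langle e_1,\dots,e_{d-2}\rangle$, and both arguments then invoke the known $2\times2$ real Jordan block result. The complex case with $\arg(\kappa)/\pi\notin\BbQ$ also goes through your way, since Theorem~\ref{thm:old}(4) gives positive dimension unconditionally for the quotient block.

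The genuine gap is the complex case with $\arg(\kappa)/\pi\in\BbQ$. Quotienting all the way down to a single rotation block $\bigl(\begin{smallmatrix}a&b\\-b&a\end{smallmatrix}\bigr)$ and invoking Theorem~\ref{thm:old}(5) does not finish the proof: that theorem says $\U_{\overline M}$ is \emph{finite} when $\beta=|\kappa|^{-q}\le G$, countable for $\beta\in(G,\beta_*)$, and of Hausdorff dimension zero at $\beta=\beta_*$. Thus for, say, $\kappa=re^{i\pi/3}$ with $r$ close to $1$ you obtain $\dim_H\U_{\overline M}=0$ and can conclude nothing about $\U_M$. The non-trivial Jordan structure is essential here and is destroyed by your quotient to a simple block. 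What the paper does instead is quotient (as you do) only down to the $2\times2$ \emph{complex} Jordan block, i.e.\ the $4\times4$ real matrix, and then run a separate, non-quotient argument for that case: writing $q$ minimal with $\kappa^q\in\BbR$, one \emph{freezes} each digit $a_j$ with $q\nmid j$ to $\mathrm{sign}\,\Im(\kappa^j)$, so that the remaining free digits $(a_{nq})_{n\ge0}$ code precisely the IFS for the \emph{real} $2\times2$ Jordan block $\bigl(\begin{smallmatrix}\kappa^q&1\\0&\kappa^q\end{smallmatrix}\bigr)$. The frozen slice is the locus where $\Im z_2$ is maximal, which forces the frozen digits to be uniquely determined, and a H\"older-continuous injection then transports the positive-dimensional uniqueness set of the real Jordan block into $\U_M$. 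In short, the missing idea is a digit-freezing reduction that converts the complex Jordan block to a real Jordan block of the \emph{same} size, rather than collapsing the Jordan structure entirely.
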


\begin{proof}
First, assume that $\kappa \in \BbR$.
Let
\[
M' = \begin{pmatrix}
     \ka &1 \\
     0  &\ka
    \end{pmatrix}.
\]

By \cite[Lemma~3.1]{HS2D}, we have:
    \[ \pi_M(a_0 a_1 a_2 \dots) =
        \begin{pmatrix}
        \frac{1}{(k-1)!}\frac{d^{k-1}}{d \ka^{k-1}} \sum_{j=0}^\infty a_j \ka^j \\
        \frac{1}{(k-2)!}\frac{d^{k-2}}{d \ka^{k-2}} \sum_{j=0}^\infty a_j \ka^j \\
        \vdots \\
        \frac{d}{d \ka} \sum_{j=0}^\infty a_j \ka^j \\
        \sum_{j=0}^\infty a_j \ka^j
        \end{pmatrix}
    \]
and for $M'$ we have
    \[ \pi_{M'}(a_0 a_1 a_2 \dots) =
        \begin{pmatrix}
        \frac{d}{d \ka} \sum_{j=0}^\infty a_j \ka^j \\
        \sum_{j=0}^\infty a_j \ka^j
        \end{pmatrix}.
    \]
(Here we are assuming $u$ our cyclic vector is $\begin{pmatrix}0 & \dots & 0 & 1
    \end{pmatrix}^T$.)
Hence if $a_0 a_1\dots \in U_{M'}$, then
    the last two coordinates of $\pi_M(a_0 a_1 \dots)$ form a unique pair,
    whence $a_0 a_1 \dots \in U_M$. As $\dim_H\U_{M'} > 0$ from
    \cite[Corollary 4.8]{HS2D}, the result follows.

Next assume that $\ka \not \in \BbR$.
If $\arg(\kappa)/\pi \not\in \BbQ$, then we can repeat the above proof with
    $M' = \begin{pmatrix} \kappa \end{pmatrix}$ and \cite[Section 4.3.1]{HS2D}.
So assume that $\arg(\kappa/\pi) \in \BbQ$.
From the techniques above, we see that it suffices to show the
    $2 \times 2$ case, after which the result will follow.
Let \[ M = \begin{pmatrix}
\kappa & 1       \\
  0     & \kappa
\end{pmatrix} \]
with $0 < |\kappa| < 1$, $\arg(\kappa)/\pi \in \BbQ$.
Let $(z_1,z_2) \in A_M$ with $z_1, z_2 \in \BbC$.

Let $q > 0$ be minimal such that
    $\kappa^q \in \mathbb{R}$.
Let \[ M' = \begin{pmatrix}
\kappa^q & 1       \\
  0     & \kappa^q
\end{pmatrix} \]
and consider the set $F = \{(a_0 a_1 a_2 a_3 a_4 \dots)\}$ where
   \[ a_j = \left\{
    \begin{array}{ll}
    -1 & \mathrm{if}\ \Im(\ka^j) < 0 \\
    +1 & \mathrm{if}\ \Im(\ka^j) > 0 \\
    -1\ \mathrm{or}\ +1 & \mathrm{if}\ \Im(\ka^j) = 0
    \end{array} \right.
\ \ \mathrm{for\ all}\ j\]
and let $\mathcal{F} = \pi_M(F)$.
We note that $\Im(\ka^j) = 0$ if and only if $q \mid j$.

Let $s = \max(\Im(z_2): (z_1, z_2) \in A_{M})$.
We see that $(z_1, z_2) \in \mathcal{F}$ if and only if $\Im(z_2) = s$.
Furthermore, we see that there is a map $\varphi$ from $\{\pm 1\}^\BbN$ to $\mathcal{F}$ given by
    \[
    \varphi(b_0 b_1 b_{2} \dots) =  \pi_M(b_0 a_1 a_2 \dots a_{q-1} b_1 a_{q+1} \dots),
    \]
    where $a_1, a_2, a_3,$ etc are chosen to as above. The map~$\varphi$ is one-to-one,
    and moreover, it is clearly H\"older continuous in the standard metric.
This gives us that if a point is unique in $A_{M'}$, then the
    corresponding point in $\mathcal{F}$ is unique, from which the result follows.
\end{proof}

\section{Complex eigenvalues}
\label{sec:simple block}

\begin{lemma}\label{lem:two-rational}
Let $\ka_1, \ka_2, \dots, \ka_d$ be such that
    $\arg(\ka_j)/\pi \in \BbQ$.
Let $q > 0$ be minimal such that $\ka_j^q \in \BbR$ for all $j$.
\[
M=\begin{pmatrix} \kappa_1 &   &  & 0       \\
                    & \kappa_2 &  &         \\
                    &          & \ddots &   \\
                  0 &          & & \kappa_d
\end{pmatrix},
M'=\begin{pmatrix} \kappa_1^q &   &  & 0       \\
                    & \kappa_2^q &  &         \\
                    &          & \ddots &   \\
                  0 &          & & \kappa_d^q
\end{pmatrix}.
\]
We have $\dim_H(\mathcal U_{M'}) > 0$ if and only if
   $\dim_H(\mathcal U_{M}) > 0$.
\end{lemma}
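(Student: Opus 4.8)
The plan is to reduce everything to the grouping of digits modulo $q$, exactly as in the proof of Lemma~\ref{lem:jordan}. We may assume $q\ge2$ (otherwise $M'=M$) and, after rescaling coordinates, that the common cyclic vector is $u=(1,\dots,1)^T$. Writing $a^{(r)}=(a_r,a_{q+r},a_{2q+r},\dots)$ for $0\le r\le q-1$, and using $M'=M^q$ together with $\ka_j^q\in\BbR$, one gets the identity
\[
\pi_M(a)=\sum_{r=0}^{q-1}M^r\,\pi_{M'}\!\bigl(a^{(r)}\bigr),\qquad \pi_{M'}(b)\in\BbR^d .
\]
From this I would extract two elementary facts. \emph{(i)} If $a\in U_M$ then $a^{(r)}\in U_{M'}$ for every $r$: replacing a single subsequence $a^{(r)}$ by a different $M'$-address of the same point would, via the identity, give a different $M$-address of $\pi_M(a)$. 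Hence $\mathcal U_M\subseteq L(\mathcal U_{M'}\times\dots\times \mathcal U_{M'})$, where $L(y_0,\dots,y_{q-1})=\sum_{r=0}^{q-1}M^ry_r$ is real-linear, hence Lipschitz. \emph{(ii)} Conversely, an affine copy of $\mathcal U_{M'}$ sits inside $\mathcal U_M$; this is the main construction.

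To obtain \emph{(ii)}, put $W=\operatorname{span}_{\BbR}\{(M')^n u:n\ge0\}\subseteq\BbC^d$. Every vector of $W$ has real coordinates, while $\ka_j^k\notin\BbR$ for some $j$ whenever $q\nmid k$ (minimality of $q$); so $M^k u\notin W$ for all $k$ with $q\nmid k$. I then pick a real-linear functional $\ell$ on $\BbC^d$ that vanishes on $W$ and has $w_k:=\ell(M^k u)\ne 0$ for every $k$ with $q\nmid k$: inside the space of functionals vanishing on $W$, each condition $\ell(M^ku)=0$ defines a proper subspace, and a countable union of proper subspaces is not everything; also $\sum_k|w_k|\le\|\ell\|\sum_k\|M^k\|<\infty$. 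Then $\ell(\pi_M(a))=\sum_k a_k w_k$ and $w_{qn}=0$, so $\ell$ attains its maximum $s:=\sum_{q\nmid k}|w_k|$ on $A_M$ exactly along the sequences with $a_k=\operatorname{sign}(w_k)$ for all $q\nmid k$, the digits in positions $q\mid k$ being free. Writing $c^{(r)}$ ($1\le r\le q-1$) for these forced subsequences and $v=\sum_{r=1}^{q-1}M^r\pi_{M'}(c^{(r)})$, the identity shows $A_M\cap\{\ell=s\}=\pi_{M'}(\{\pm1\}^{\BbN})+v$, a translate of $A_{M'}$; and a point $\pi_{M'}(b)+v$ of this slice lies in $\mathcal U_M$ iff $b\in U_{M'}$. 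Indeed, if $b\in U_{M'}$ and $a'$ is any $M$-address of $\pi_{M'}(b)+v$, then $\ell(\pi_M(a'))=s$ forces $(a')^{(r)}=c^{(r)}$ for $r\ge1$, so the identity collapses to $\pi_{M'}((a')^{(0)})=\pi_{M'}(b)$, whence $(a')^{(0)}=b$ and $a'$ is the prescribed address; the converse direction is \emph{(i)}. Thus $\mathcal U_M\cap\{\ell=s\}=\mathcal U_{M'}+v$, and since translations preserve Hausdorff dimension, $\dim_H\mathcal U_M\ge\dim_H\mathcal U_{M'}$, which gives $\dim_H\mathcal U_{M'}>0\Rightarrow\dim_H\mathcal U_M>0$.

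For the reverse implication, \emph{(i)} and Lipschitzness of $L$ give $\dim_H\mathcal U_M\le\dim_H(\mathcal U_{M'}\times\dots\times\mathcal U_{M'})$, so it remains to show that $\dim_H\mathcal U_{M'}=0$ forces the $q$-fold self-product to have Hausdorff dimension $0$. Here I would argue by cases using the known structure of $\mathcal U_{M'}$ for the real diagonal matrix $M'$ (Theorem~\ref{thm:old} together with Lemma~\ref{lem:block}): when $\mathcal U_{M'}$ is finite or countable the product is too, and a countable set has Hausdorff dimension $0$; in the remaining case, where $\mathcal U_{M'}$ is an uncountable null set of zero Hausdorff dimension, it is the projection of a zero-entropy subshift and hence has upper box dimension $0$ as well, so that $\dim_H(\mathcal U_{M'}\times\dots\times\mathcal U_{M'})\le q\,\overline{\dim}_B\mathcal U_{M'}=0$. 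I expect this last point — controlling $\overline{\dim}_B\mathcal U_{M'}$, i.e.\ ruling out a dimension jump in the self-product when $\dim_H\mathcal U_{M'}=0$ — to be the only genuinely delicate step; the two consequences of the digit-grouping identity and the extremality argument for the forward implication are routine adaptations of the proof of Lemma~\ref{lem:jordan}.
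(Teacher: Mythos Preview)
Your argument is essentially the paper's, with two cosmetic differences worth noting.

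For the forward direction, the paper also pins down the digits at positions $j$ with $q\nmid j$ by an extremality argument: it maximises $\Im(z_1)$, then $\Im(z_2)$ subject to that, and so on, so that the ``free'' slot $\{(z_1,\dots,z_d):\Im(z_j)=s_j\ \text{for all }j\}$ is exactly the image of the sequences with the $a_j$ forced for $q\nmid j$ and arbitrary for $q\mid j$. Your single generic real-linear functional $\ell$ with $\ell|_{\BbR^d}=0$ and $\ell(M^ku)\ne0$ for $q\nmid k$ accomplishes the same thing in one stroke; it is a cleaner packaging of the same idea, and the Baire/measure argument for the existence of such an $\ell$ is fine.

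For the reverse direction, the paper likewise observes that if $a\in U_M$ then each subsequence $a^{(r)}$ lies in $U_{M'}$, giving $q$ maps from $\U_M$ into affine copies of $\U_{M'}$, and then simply asserts that ``if $\dim_H\U_M>0$, then one of these maps will also have positive Hausdorff dimension''. Your route via $\U_M\subseteq L(\U_{M'}^{\,q})$ and the inequality $\dim_H(X^q)\le \dim_H X+(q-1)\overline{\dim}_B X$, together with the case analysis of $\U_{M'}$ for a real diagonal $M'$, is a more explicit justification of that step; the ``delicate'' case you flag (the $\be=\be_*$ zero-entropy subshift) is indeed the only one needing the box-dimension input, and the paper does not spell this out.
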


\begin{proof}
Similarly to the proof of the complex part of Lemma~\ref{lem:jordan},
consider the set $F = \{(a_0 a_1 a_2 a_3 a_4 \dots)\}$ where
   \[ a_j = \left\{
    \begin{array}{ll}
    -1 & \mathrm{if}\ \Im(\ka_1^j) < 0 \\
    +1 & \mathrm{if}\ \Im(\ka_1^j) > 0 \\
    -1 & \mathrm{if}\ \Im(\ka_1^j) = 0\ \mathrm{and}\ \Im(\ka_2^j) < 0 \\
    +1 & \mathrm{if}\ \Im(\ka_1^j) = 0\ \mathrm{and}\ \Im(\ka_2^j) > 0 \\
    \vdots &  \vdots \\
    -1 & \mathrm{if}\ \Im(\ka_1^j) = \Im(\ka_2^j) = \dots = \Im(\ka_n^j) = 0\ \mathrm{and}\ \Im(\ka_{n+1}^j) < 0 \\
    +1 & \mathrm{if}\ \Im(\ka_1^j) = \Im(\ka_2^j) = \dots = \Im(\ka_n^j) = 0\ \mathrm{and}\ \Im(\ka_{n+1}^j) > 0 \\
    \vdots &  \vdots \\
    -1\ \mathrm{or}\ +1 & \mathrm{if}\ \Im(\ka_1^j) = \Im(\ka_2^j) = \dots = \Im(\ka_d^j) = 0
    \end{array} \right. \]
for all $j$ and let $\mathcal{F} = \pi(F)$.
We note that $\Im(\ka_1^j) =  \cdots = \Im(\ka_d^j) = 0$ if and only if $q \mid j$.

Put
\begin{align*}
s_1 & = \max(\Im(z_1): (z_1, \dots, z_d) \in A), \\
s_2 & = \max(\Im(z_2): (z_1, \dots, z_d) \in A, \Im(z_1) = s_1), \\
s_3 & = \max(\Im(z_3): (z_1, \dots, z_d) \in A, \Im(z_1) = s_1, \Im(z_2) = s_2), \\
    & \hspace{1cm} \vdots \\
s_d & = \max(\Im(z_d): (z_1, \dots, z_d) \in A, \Im(z_1) = s_1,
             \Im(z_2) = s_2, \dots,  \Im(z_{d-1}) = s_{d-1}).
\end{align*}
We see that $(z_1, z_2, \dots, z_d) \in \mathcal{F}$ if and only if $\Im(z_j) = s_j$
    for $j = 1, 2, \dots, d$.
Furthermore, the map $\psi: \{\pm 1\}^\BbN \to \mathcal{F}$ defined by
    \[
   \psi(b_0 b_1 \dots) =  \pi_M(b_0 a_1 a_2 \dots a_{q-1} b_1 a_{k+1} a_{k+2} \dots) ,
    \]
where the $a_1, a_2, a_3,$ etc are chosen as above, is one-to-one and H\"older continuous.
This gives us that if a point is unique in $A_{M'}$, then the
    corresponding point in $\mathcal{F}$ is unique. Moreover, $\dim_H \U_{M'} > 0$ implies
     $\dim_H \U_{M} > 0$.

For the other direction, assume that that
    $x = \pi_M(a_0 a_1 a_2 \dots)$ is in $\U_M$.
Consider the point $\pi_{M'}(a_0 a_q a_{2q} \dots)\in A_{M'}$.
If it is not a point of uniqueness, then there exists a
    $\pi_{M'}(b_0 b_q b_{2q} \dots) = \pi_{M'}(a_0 a_q a_{2q} \dots)$.
But by construction
    $x = \pi_M(b_0 a_1 a_2 \dots a_{q-1} b_q a_{q+1} \dots)$, a contradiction.

A similar argument can be used for the subsequence
    $a_{j} a_{q+j} a_{2 q + j} \dots$ mapping to a
    simple linear transformation of $\U_{M'}$, namely, $M^j \U_{M'}$.

Hence for any point of uniqueness in $\U_M$ we have $q$ maps into affine
    copies of $\U_{M'}$, each one giving a point of uniqueness.
If $\dim_H \U_M > 0$, then one of these maps will also have have positive
    Hausdorff dimension, from which the result follows.
\end{proof}

Now we are ready to conclude the proof of Theorem~\ref{thm:new}. Note first
that if $|\ka_1| \neq |\ka_2|$, then $|\ka_1^q| \neq |\ka_2^q|$ with
    $\ka_1^q, \ka_2^q \in \BbR$.
From this Theorem~\ref{thm:new}~(\ref{case:different}) follows from
Theorem~\ref{thm:old}~(\ref{case2}) or (\ref{case3}).

If $|\ka_1| = \dots = |\ka_d|=\lambda$, then $M'=\lambda^q J$, where $J$ is a
$d\times d$ diagonal matrix with $-1$ or $1$ on the diagonal.
If there exists $j$ and $k$ such that $\ka_j^q \ka_k^q < 0$, then $J$ will
    contain both a $-1$ and a $1$, and this will follow from
    Theorem~\ref{thm:old}~(\ref{case:signs}).
If no such $j$ and $k$ exists, then the result follows from
    \cite[Theorem~2]{GS}.

\end{document}